\declaretheoremstyle[
bodyfont=\normalfont,
]{remstyle}
\declaretheorem[style=remstyle, name=Remark,numberwithin=section]{rem}
\newtheorem{defi}{\bf Definition}
\newtheorem{lemma}{\bf Lemma}
\newtheorem{theorem}{\bf Theorem}
\newtheorem{claim}{\bf Claim}[section]
\newtheorem{corollary}{\bf Corollary}[section]
\newcommand\be{\begin{eqnarray*}}
\newcommand\ee{\end{eqnarray*}}
\newcommand\beq{\begin{equation}}
\newcommand\eeq{\end{equation}}
\newcommand\ben{\begin{eqnarray}}
\newcommand\een{\end{eqnarray}}
\newcommand{\norm}[1]{\left\lVert#1\right\rVert}
\newcommand*{\missingreference}{\colorbox{red}{?reference?}}
\newcommand*{\missingcitation}{\colorbox{red}{?citation?}}
\def\@setref#1#2#3{%
  \ifx#1\relax
   \protect\G@refundefinedtrue
   \nfss@text{\reset@font\missingreference}%
   \@latex@warning{Reference `#3' on page \thepage \space
             undefined}%
  \else
   \expandafter#2#1\null
  \fi}
\def\@citex[#1]#2{\leavevmode
  \let\@citea\@empty
  \@cite{\@for\@citeb:=#2\do
    {\@citea\def\@citea{,\penalty\@m\ }%
     \edef\@citeb{\expandafter\@firstofone\@citeb\@empty}%
     \if@filesw\immediate\write\@auxout{\string\citation{\@citeb}}\fi
     \@ifundefined{b@\@citeb}{\hbox{\reset@font\missingcitation}%
       \G@refundefinedtrue
       \@latex@warning
         {Citation `\@citeb' on page \thepage \space undefined}}%
       {\@cite@ofmt{\csname b@\@citeb\endcsname}}}}{#1}}
\begin{document}

\date{\today}

\title[Bourgain-Chang proof of the weak Erd\H{o}s-Szemer\'edi conjecture] 
{Bourgain-Chang proof of the weak Erd\H{o}s-Szemer\'edi conjecture}

\author{Dmitrii Zhelezov}
\thanks{Alfr\'{e}d R\'{e}nyi Institute of Mathematics} 
\address{Alfr\'{e}d R\'{e}nyi Institute of Mathematics, 
Re\'altanoda utca 13-15., 
1053 Budapest, Hungary} \email{dzhelezov@gmail.com}

\date{\today}

\begin{abstract}
	This is an exposition of the following `weak' Erd\H{o}s-Szemer\'edi conjecture for integer sets proved by Bourgain and Chang in 2004. For any $\gamma > 0$ there exists $\Lambda(\gamma) > 0$ such that for an arbitrary $A \subset \mathbb{N}$, if $|AA| \leq K|A|$ then
	$$
		E_{+}(A) \leq K^{\Lambda}|A|^{2+\gamma}.
	$$
\end{abstract}

\maketitle

\section*{Notation}
The following notation is used throughout the paper. The expressions $X \gg Y$, $Y \ll X$, $Y = O(X)$, $X = \Omega(Y)$ all have the same meaning that there is an absolute constant $c$ such that $|Y| \leq c|X|$. For a graph $G$, $E(G)$ denotes the set of edges and $V(G)$ denotes the set of vertices. If $X$ is a set then $|X|$ denotes its cardinality. 

We write $A \approx B$ if $A \leq B \leq 2A$.

For sets of numbers $A$ and $B$ the sumset $A + B$ is the set of all pairwise sums  $\{ a + b: a \in A, b \in B \}$, and similarly $AB$, $A-B$ denotes the set of products and differences, respectively. 

For a number $x$ and a set $Y$ the expression $xY$ denotes the set $\{ xy : y \in Y \}$, and similarly for the additive shift $x + Y ;= \{ x + y: y \in Y  \}$.

If $G \subset A \times B$ is some graph  then $A \stackrel{G}{+} B$ denotes the the restricted sumset $\{ a + b : (a, b) \in G \}$. 

%We write $A^{(k)}$ and $kA$ for the iterated product set $A \ldots A$ and the sumset $A + \ldots + A$ respectively ($A$ is repeated $k$ times).

For a vertex $v$ of $G$ we write $N_G(v)$ for the set of neighbors of $v$ in $G$. The subindex $G$ may be omitted if it is clear to which graph the vertex belongs.

The additive energy $E_+(A, B)$ is defined as the number of additive quadruples $(a_1, b_1, a_2, b_2)$ such that 
$$
a_1 + b_1 = a_2 + b_2.
$$
We write $E_+(A)$ or simply $E(A)$ for $E_+(A, A)$. 

For a set $A$ we write $1_A$ for the indicator function of $A$ and the convolution $f * g$ is defined as
$$
f * g(x) := \sum_y f(y) g(x-y),
$$
where the sum is taken over the support of $f$. In particular, one verifies that
$$
E_+(A, B) = \sum_x  (1_A* 1_B)^2(x) = \norm{1_A* 1_B}^2_2.
$$

\section{Introduction}

	In the present exposition we give a slightly simplified proof of the result due to Bourgain and Chang \cite{BourgainChang} below. 

\begin{theorem} \label{thm:maintheorem}
For any $\gamma > 0$ there exists $\Lambda(\gamma) > 0$ such that for an arbitrary $A \subset \mathbb{Z}$ if $|AA| \leq K|A|$ then
	$$
		E_{+}(A) \leq K^{\Lambda}|A|^{2+\gamma}.
	$$
\end{theorem}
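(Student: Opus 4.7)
The plan is to prove, by induction on a positive integer $k$, the estimate
$$E_+(A) \leq K^{\Lambda(k)}\, |A|^{2 + 1/k},$$
from which Theorem~\ref{thm:maintheorem} follows upon taking $k \geq 1/\gamma$. The base case $k=1$ is just the trivial bound $E_+(A) \leq |A|^3$, valid regardless of $K$.

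The inductive step begins by using Pl\"unnecke-Ruzsa to promote the hypothesis $|AA| \leq K|A|$ into uniform control over every iterated product and quotient: sets of the form $\{a_1 \cdots a_m/(b_1 \cdots b_n) : a_i, b_j \in A\}$ have size at most $K^{O(m+n)}|A|$. This will be the sole use of the multiplicative small-doubling assumption. Next, a dyadic decomposition of the convolution $1_A * 1_A$ isolates a popular level set $P = \{x : 1_A * 1_A(x) \approx \tau\}$ with $\tau^2|P| \gg E_+(A)/\log|A|$, yielding a bipartite graph $G \subset A \times A$ on $\gg \tau|P|$ edges such that $A \stackrel{G}{+} A \subset P$.

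The heart of the argument then couples the two structures. For each $c \in P$, the $\approx \tau$ representations $c = a+b$ with $(a,b) \in G$ determine ratios $a/b$ lying in the (small) quotient set controlled above; summing over $c$ and swapping the order of summation yields a bilinear inequality relating $E_+(A)$ to shifted energies $E_+(A, \lambda A)$ for $\lambda$ ranging over that quotient set. A Cauchy-Schwarz step combined with Balog-Szemer\'edi-Gowers extracts a large subset $A' \subset A$ of small additive doubling; applying the inductive hypothesis (with parameter $k-1$) to $A'$ and then lifting back to $A$ via the Pl\"unnecke bound on ratios should improve the exponent from $2 + 1/(k-1)$ to $2 + 1/k$ at the cost of extra polynomial factors in $K$ that can be absorbed into $\Lambda(k)$.

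The main obstacle, which will absorb most of the technical work, is this sum-product bridge: there is no homomorphism from $(\mathbb{Z}, \times)$ into an additive group in which multiplicative smallness is preserved, so the transfer of structure must proceed through combinatorial identities and second-moment bounds on convolutions twisted by multiplicative dilates. A secondary difficulty is the bookkeeping of constants: one must verify that $\Lambda(k)$ remains finite at each inductive step, even though it is allowed (and in fact must be permitted) to grow with $1/\gamma$, since no uniform bound is possible in the critical regime $\gamma \to 0$.
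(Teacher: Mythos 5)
There is a genuine gap, and it sits exactly where your sketch says "the heart of the argument": the inductive step has no working mechanism. Applying the hypothesis with parameter $k-1$ to a large subset $A'\subset A$ gives $E_+(A')\leq K^{\Lambda(k-1)}|A'|^{2+1/(k-1)}$, which is a \emph{weaker} exponent than the one you are trying to establish for $A$, and lifting back to $A$ by covering with dilates of $A'$ (Pl\"unnecke on ratios) only loses further factors; nothing in the sketch identifies a quantity that actually improves from $2+1/(k-1)$ to $2+1/k$. Worse, the Balog--Szemer\'edi--Gowers step points in the wrong direction: BSG extracts from a large-energy set a large subset of small \emph{additive} doubling, but small additive doubling does not bound energy from above -- such subsets are precisely the enemy. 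To turn that extraction into a contradiction you would need to know that a set cannot simultaneously have small additive doubling and small multiplicative doubling when $K$ is a small power of $|A|$; at that strength this is essentially the statement being proven, so the step is circular. The only unconditional input of this type you invoke (Pl\"unnecke plus Freiman/Chang-type bounds) is non-trivial only in the regime $K\ll\log|A|$, which is exactly the regime the theorem is designed to go beyond.

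You have also missed the structural fact that makes the integer case tractable, and your "main obstacle" paragraph asserts its negation: the prime-valuation map $\mathcal{P}$, sending $\prod p_i^{\alpha_i}$ to its exponent vector, \emph{is} a homomorphism from $(\mathbb{Q}_{>0},\times)$ into a free abelian group, so $|AA|\leq K|A|$ becomes additive small doubling of $\mathcal{P}(A)$, whose rank is at most $K$ by Freiman's lemma. The paper's proof is built entirely on this transfer: Chang's observation that such an $A$ is $6^K$-separating for additive energy, a fibering lemma that splits the coordinates of $\mathcal{P}(A)$ so that the doubling constant factors as $K_1K_2\lesssim K$ along a dense graph, and an induction on scales of depth about $\log\log K$ (the formalism of admissible pairs $(\psi,\phi)$) that drives most of the local doubling constants down to where the Freiman/Chang bound is non-trivial, finishing with a covering-energy lemma like the one you gesture at. Without some substitute for this multiplicative-to-additive dictionary and the induction on scales, the "sum-product bridge" you defer to technical work is not a technicality -- it is the theorem, and for real numbers (where no such homomorphism exists) it remains open.
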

Bourgain and Chang actually proved a much stronger result which is as follows.
\begin{theorem} \label{thm:BCstrong}
Given $\gamma > 0$ and $q > 2$, there is a constant $\Lambda(\gamma, q)$ such that if $A \subset \mathbb{Z}$ is a finite set with  $|A| = N; |A A| < KN$, then
$$
\norm{ \sum_{n \in A} c_n e(n \theta)}_{L_q(\mathbb{R}/\mathbb{Z})} \leq K^\Lambda N^\gamma \left( \sum_{n \in A} c^2_n\right)^{1/2} 
$$
with the usual notation $e(x) := \exp(2 \pi i x)$.
\end{theorem}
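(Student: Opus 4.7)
The plan is to reduce the $L_q$ bound first to an even-integer moment estimate, then to exploit the multiplicative structure of $A$ forced by the hypothesis $|AA|\le KN$ through an induction on a structural parameter.

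First, by log-convexity of $L_p$ norms, $\norm{f}_{L_q}$ for any real $q > 2$ interpolates between $\norm{f}_{L_2} = (\sum c_n^2)^{1/2}$ (Parseval) and $\norm{f}_{L_{2k}}$ for any even integer $2k > q$. Hence it suffices to prove the estimate with $q = 2k$, $k = k(\gamma)$ a large integer depending on $\gamma$. For such $q$ the norm expands as
\[
\norm{f}_{L_{2k}}^{2k} \;=\; \sum_{\substack{n_1,\dots,n_k,\,m_1,\dots,m_k\,\in A \\ n_1+\dots+n_k \,=\, m_1+\dots+m_k}} c_{n_1}\cdots c_{n_k}\,\overline{c_{m_1}\cdots c_{m_k}},
\]
converting the problem into a bound on the weighted additive $2k$-energy of $A$.

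Next I would extract multiplicative structure. Iterated application of the multiplicative Plünnecke--Ruzsa inequality gives polynomial control on all iterated products of $A$, and since $\log A \subset \mathbb{R}$ then has small additive doubling, Freiman's theorem embeds $\log A$ into a real generalized arithmetic progression of dimension $d = d(K)$. Translating back, $A$ lies inside a multiplicative GAP with generators $p_1,\dots,p_d$: each $a\in A$ admits an essentially unique representation $a = p_1^{e_1(a)}\cdots p_d^{e_d(a)}$ with bounded exponents. The additive $2k$-energy can now be unfolded in these multiplicative coordinates.

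The heart of the argument is an induction on $d$. The base case $d=1$ (a geometric progression) is handled directly, or via Rudin's inequality applied to the essentially dissociated additive structure of a geometric progression, producing the bound with no $N^\gamma$ loss. For the induction step one peels off a generator: group the elements of $A$ by the exponent $e_1$, apply H\"older and orthogonality in the $p_1$-direction, and invoke the inductive hypothesis on each slice, which lies in a $(d-1)$-dimensional multiplicative GAP and still enjoys small multiplicative doubling.

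The main obstacle is controlling the accumulation of losses over the $d$ induction steps: since $d$ grows with $K$, a crude induction losing $K^{O(1)}$ per level would yield a useless final bound of the form $K^{\Omega(\log K)}$. Bourgain and Chang circumvent this by distributing the $N^\gamma$ slack uniformly across the induction---each level absorbs only an $N^{\gamma/d}$ factor---and by invoking Rudin-type $L_q$ bounds for multiplicatively dissociated components at the critical junctures, which contribute only a $\sqrt{q}$ factor instead of the $K^{O(1)}$ blowup that H\"older alone would produce. The delicate bookkeeping of these parameters, rather than any single novel inequality, is where the real technical difficulty resides.
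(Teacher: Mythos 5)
Your opening reduction (monotonicity of $L^p$ norms on $\mathbb{R}/\mathbb{Z}$ to pass from $L_q$ to an even moment $L_{2k}$, then expanding into a weighted $2k$-energy) is fine and is indeed how the $L_q$ statement connects to energy estimates. But the core of your argument has two genuine gaps. First, you extract the multiplicative structure by applying Freiman's theorem to $\log A\subset\mathbb{R}$, obtaining a multiplicative GAP with real generators, and you then claim ``orthogonality in the $p_1$-direction'' when peeling off a generator. No such orthogonality exists for arbitrary real (or even non-coprime rational) generators: the decoupling of additive energy across powers of a fixed direction is exactly the place where integrality is used. In the paper this is done through the prime valuation map $\mathcal{P}$ and Freiman's \emph{Lemma} (the rank bound $\mathrm{rank}(\mathcal{P}(A))\le K$), and the separation mechanism (the $6$-separating property of powers of a single prime, inequality (\ref{eq:oneprimeseparation})) rests on $p$-adic valuations: a relation $y_1+y_2=y_3+y_4$ forces two elements into the same $p$-power slice. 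A geometric progression with real ratio close to $1$ is not additively dissociated, so your proposed Rudin-based base case also fails in the generality your reduction produces. This is not a cosmetic point --- the absence of any such mechanism over $\mathbb{R}$ is precisely why the real version of the conjecture is open.

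Second, your induction ``on the number of generators $d$, invoking the inductive hypothesis on each slice, which still enjoys small multiplicative doubling'' does not go through: a slice $\{a\in A: e_1(a)=j\}$ only satisfies $|A_jA_j|\le K|A|$, which is useless when $|A_j|$ is much smaller than $|A|$, so the slices do \emph{not} inherit controlled doubling relative to their own size. Overcoming this is the actual heart of the Bourgain--Chang argument and of the exposition: doubling is measured along a graph of density $\delta$, the Fibering Lemma (Lemma \ref{lm:FiberingLemma}) produces a large subgraph with fibers of uniform size, uniform fiber-graph density, and multiplying doubling constants $K_1K_2\lesssim K\log K\,\delta^{-2}$, and the induction is on \emph{scales} ($N\mapsto$ roughly two pieces of size $N^{1/2}$, via admissible pairs $(\phi,\psi)$), iterated to depth about $\log\log K$, so that all but a $\gamma$-fraction of the leaves have small local doubling (handled by Chang's $6^{K'}$-type bound, Corollary \ref{corollary:Chang}/Lemma \ref{lm:FreimanAdmissiblePair}) while the few bad leaves contribute only $N^{\gamma}$. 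Your proposal to ``distribute an $N^{\gamma/d}$ loss per level'' over $d\sim K$ generator-peeling steps names the difficulty but supplies no mechanism to enforce it; with Hölder alone, that scheme collapses back to Chang's $C^{K}$ bound, which is nontrivial only for $K\ll\log N$.
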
 

In particular,  taking $c_n := 1$ in Theorem \ref{thm:BCstrong} and expanding the $L_q$ with $q = 2k$, it follows that for an arbitrary $\gamma > 0$ and integer $k \geq 2$ there exists $\Lambda(\gamma, k)$ such that
$$
	E_k(A) \leq K^{\Lambda(k, \gamma)}|A|^{k+\gamma},
$$
where $E_k$ is the $k$\emph{-energy}, defined as
$$
E_k(A) := |(a_1, \ldots, a_k, a'_1, \ldots, a'_k) : a_1 + \ldots + a_k = a'_1 + \ldots + a'_k|. 
$$

Finally, the pinnacle of \cite{BourgainChang} is the following infinite growth sum-product theorem.
\begin{theorem} \label{thm:BCexpansion}
For any  $b > 0$ there exists integer $k(b)$ with the property that
$$
|A+ \ldots + A| + |A \ldots A| \geq |A|^b
$$ 
for any integer set $A$ ($A$ is taken $k$ times in the iterated sumset and the product set above). 
\end{theorem}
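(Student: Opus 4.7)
The plan is to argue by contradiction, assuming there exists $A$ with $|kA| \le N^b$ and $|\underbrace{A \cdots A}_{k}| \le N^b$ for $N := |A|$, and to derive a contradiction once $k = k(b)$ is chosen large enough. The whole argument is to be powered by the $E_k$-bound that follows from Theorem~\ref{thm:BCstrong}.

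The first step is to convert the small $k$-fold product into a usable multiplicative-doubling estimate. Since $AA \subset A \cdot A \cdots A$, one obtains $|AA| \le N^b$, so that the multiplicative doubling $K := |AA|/N$ satisfies $K \le N^{b-1}$. The corollary of Theorem~\ref{thm:BCstrong} recorded in the excerpt then gives, for every $\gamma > 0$ and every integer $k \ge 2$,
$$
E_k(A) \;\le\; K^{\Lambda(\gamma, k)}\, N^{k+\gamma} \;\le\; N^{\,(b-1)\Lambda(\gamma, k) + k + \gamma}.
$$

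The second step is a routine Cauchy--Schwarz estimate: writing $r(x) := \#\{(a_1,\dots,a_k) \in A^k : a_1+\cdots+a_k = x\}$, we have $N^{2k} = (\sum_x r(x))^2 \le |kA| \cdot E_k(A)$, and hence
$$
|kA| \;\ge\; \frac{N^{2k}}{E_k(A)} \;\ge\; N^{\,k - \gamma - (b-1)\Lambda(\gamma, k)}.
$$
Choosing $\gamma$ small (say $\gamma := 1$) and $k = k(b)$ large enough that $k - \gamma - (b-1)\Lambda(\gamma, k) > b$ would then contradict the standing hypothesis $|kA| \le N^b$, completing the argument.

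The decisive point is therefore whether such a $k$ actually exists, i.e.\ whether $\Lambda(\gamma, k)$ grows slowly enough in $k$ for the self-referential inequality $k > b + \gamma + (b-1)\Lambda(\gamma, k)$ to be solvable. The plan thus reduces Theorem~\ref{thm:BCexpansion} to extracting a quantitative version of Theorem~\ref{thm:BCstrong} in which $\Lambda(\gamma, k)$ grows sub-linearly in $k$ for fixed $\gamma$, so that $k/\Lambda(\gamma, k) \to \infty$. Verifying this dependence by inspecting the constants produced in the proof of Theorem~\ref{thm:BCstrong} is the only real obstacle; the remaining ingredients (the trivial containment $AA \subset A \cdot A \cdots A$ and Cauchy--Schwarz) are soft.
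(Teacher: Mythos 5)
This theorem is not actually proved in the exposition (it defers to Proposition 2 of \cite{BourgainChang}), so your proposal should be measured against that deduction, and it falls short at exactly the point you flag. Your argument sets $K\le N^{b-1}$ and then needs $k-\gamma-(b-1)\Lambda(\gamma,k)>b$, i.e.\ it needs $\Lambda(\gamma,k)$ to grow \emph{sublinearly} in $k$ for fixed $\gamma$. Theorem \ref{thm:BCstrong} gives no such uniformity: $\Lambda(\gamma,q)$ is just some constant for each fixed $(\gamma,q)$, nothing in the paper (or, to my knowledge, in the Bourgain--Chang proof, whose induction on scales is not optimized in $q$) yields $\Lambda(\gamma,k)=o(k)$, and with $K$ as large as $N^{b-1}$ the loss $K^{\Lambda(\gamma,k)}$ can swamp the gain $N^{k}$. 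So as written the proof does not close; it reduces the theorem to an unproved quantitative strengthening of Theorem \ref{thm:BCstrong}. (A smaller slip: $AA\subset A\cdots A$ is false in general, e.g.\ $A=\{2,3\}$; one gets $|AA|\le|A^{(k)}|$ by dilating, $a_0^{\,k-2}AA\subset A^{(k)}$ for a fixed $a_0\in A\setminus\{0\}$.)

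The actual deduction avoids any knowledge of how $\Lambda$ depends on $q$, and it is worth seeing why, because it uses precisely the feature of Theorem \ref{thm:BCstrong} that the corollary with $c_n=1$ discards: the arbitrary coefficients. Assume $|A^{(k)}|\le N^{b}$ with $k=2^{m}$. Since dilation is injective, the ratios $|A^{(2^{i+1})}|/|A^{(2^{i})}|$ have product at most $N^{b}$, so by pigeonhole some $B:=A^{(2^{i})}$ has multiplicative doubling $|BB|/|B|\le N^{b/m}$, which is at most $N^{\epsilon}$ once $k$ is large. Now fix $k_0\approx 2b$ and $\gamma\approx 1/(8b)$, so $\Lambda(\gamma,2k_0)$ is a single fixed constant, and apply Theorem \ref{thm:BCstrong} to $B$ with $c_n$ the indicator of a dilate $cA\subset B$ (any $c\in A^{(2^{i}-1)}$). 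Raising to the power $2k_0$ gives $E_{k_0}(A)=E_{k_0}(cA)\le N^{2k_0\Lambda b/m}\,|B|^{2k_0\gamma}\,N^{k_0}\le N^{2k_0\Lambda b/m}\,N^{2k_0 b\gamma}\,N^{k_0}$, and your Cauchy--Schwarz step then yields $|k_0A|\ge N^{k_0-2k_0b\gamma-2k_0\Lambda b/m}\ge N^{b}$ once $m=\log_2 k$ is large in terms of $b$ and the fixed $\Lambda$. Note that neither your route (which applies the energy corollary directly to $A$ with huge $K$) nor the monotonicity trick $E_{k_0}(cA)\le E_{k_0}(B)$ (which loses a factor $|B|^{k_0}$ rather than $|B|^{\gamma}$) works here; the restriction-by-coefficients inside a set of small doubling is the essential mechanism, and it is also why Theorem \ref{thm:BCexpansion} follows from Theorem \ref{thm:BCstrong} but not from the weaker Theorem \ref{thm:maintheorem} proved in this note.
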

This impressive result can be quickly deduced from Theorem \ref{thm:BCstrong}, see Proposition 2 in \cite{BourgainChang}.

	The main purpose of this exposition is to present the combinatorial arguments of \cite{BourgainChang} in a way which is more familiar for mathematicians working in arithmetic combinatorics, in particular on problems related to the sum-product phenomena. The original paper exploits the machinery of trigonometric polynomials which makes it somewhat hard to absorb for readers with little background in harmonic analysis. Since the weak Erd\H{o}s-Szemer\'edi conjecture for real sets is still wide open, we hope that a better understanding of the Bourgain-Chang method may help to make progress on the real case or at least highlight the obstacles.

	We are going to prove only Theorem \ref{thm:maintheorem} which is admittedly weaker then Theorem 2 or Theorem \ref{thm:BCexpansion}. Nevertheless, we have decided to pay such a price in order to streamline the exposition. While the machinery of trigonometric polynomials and $\Lambda_q$-constants turns out to be more robust, we believe that the proof of Theorem \ref{thm:maintheorem} presented below already contains all essential ingredients needed for the general case. The case of $E_+$, however, allows one to make all the arguments on the `physical side' (basically, because the $L_2$ norm is invariant under the Fourier transform) which makes the proof purely combinatorial and elementary.
	
	We therefore suggest using the current note as a warm-up or as a supplementary reading for \cite{BourgainChang}. We will occasionally skip some intermediate steps in the calculations which we think are routine, referring the reader to the original paper for details. Again, our motivation here is to give a somewhat informal sketch of the arguments and convince the reader that the whole setup \emph{must} work. The details may then be filled by reading  \cite{BourgainChang}, which is an impeccable and rigorous piece. Any errors, gaps, inconsistencies or sloppy explanations are solely due to the author of this note.

\section{Motivating examples}

We start with some motivating examples. Let $Y$ be an integer set which can be decomposed as a disjoint union
$$
		Y = \bigcup_{i \in I} a_i X_i
$$
with $I = {1, \ldots, N}$, some distinct numbers $a_i \in \mathbb{Z}$ and integer sets $X_i$. Applying the Cauchy-Schwarz inequality twice, one can then write for the energy (all the summations are through the indices in $I$)
\ben
E^{1/2}_+(Y) &=& \norm{ \sum_{i, j} 1_{a_iX_i} *1_{a_iX_i}}_2 \leq \sum_{i,j} \norm{1_{a_iX_i} *1_{a_iX_i}}_2 \\
					 &=&  \sum_{i, j} E^{1/2}(a_iX_i, a_jX_j) \leq \sum_{i, j}E^{1/4}(X_i) E^{1/4}(X_j) \label{eq2} \\
					 &=& (\sum_i E^{1/4}(X_i))^2 \leq N \sum_i E^{1/2}(X_i).				\label{eq3} 
\een

We can thus bound the energy $E(Y)$ by the energy of the constituents. Of course, without any prior knowledge about $X_i$'s and $a_i$'s the inequality above doesn't give much, but one might hope that under certain conditions on $a_i, i \in I$ there is almost no additive interaction between the sets $a_iX_i$ and $a_jX_j$ in (\ref{eq2}) and a better bound
\beq \label{eq:setup}
E^{1/2}_+(\bigcup_{i \in I} a_i X_i) \leq \psi(|I|)\sum_i E^{1/2}(X_i)
\eeq 
holds with some sublinear function $\psi$. We make the following definition.

\begin{defi}[Separating sets]
A set $A \subset \mathbb{Z}$ is  \emph{$\psi$-separating} if the bound
\beq \label{eq:psibound}
E^{1/2}(\bigcup_{a \in A} \{ a X_a \})  \leq \psi \sum_{a \in A} E^{1/2}(X_a)
\eeq
holds for any collection of integer sets $X_a$ such that $(a, X_{a'}) = 1$ for any $a, a' \in A$.
\end{defi}

\begin{rem}
The definition of separating sets seems to be related to the notion of \emph{decoupling} in harmonic analysis extensively studied by Bourgain himself and coauthors in later works, see e.g. \cite{BourgainDemeterGuth} and references therein. Indeed, for two sets $X$ and $Y$ the additive energy $E(X, Y)$ is equal to $\langle (\hat{1_X})^2, (\hat{1_Y})^2 \rangle$, so Theorem \ref{thm:BCstrong} can be seen as a decoupling result. In particular, (\ref{eq:oneprimeseparation}) below is a manifestation of the fact that a family of \emph{bi-orthogonal} functions exhibit $L_2$-decoupling, see Terry Tao's blog \cite{TaoBlog} on decoupling for more details.
We don't know if there is a deeper connection between geometric decoupling and the sum-product phenomenon.

\end{rem}

We record (\ref{eq3}) for future use.
\begin{claim}[Trivial separation bound] \label{claim:trivialseparation}
  Any integer set $A$ is $|A|$-separating. 
\end{claim}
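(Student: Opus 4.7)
The plan is essentially to re-package the two-step Cauchy--Schwarz computation already carried out in the motivating example, equations (\ref{eq2})--(\ref{eq3}), and verify that it goes through without invoking the coprimality hypothesis $(a, X_{a'}) = 1$. That hypothesis is precisely what one hopes to exploit in order to beat the trivial constant $|A|$; for the trivial bound it plays no role.

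First I would set $Y := \bigcup_{a \in A} a X_a$ and note the pointwise inequality $1_Y \leq f := \sum_{a \in A} 1_{a X_a}$. Since convolution of non-negative functions is monotone in its arguments, this gives $1_Y * 1_Y \leq f * f$ pointwise, so by the triangle inequality in $L^2$,
$$ E^{1/2}(Y) = \norm{1_Y * 1_Y}_2 \leq \norm{f * f}_2 \leq \sum_{a, a' \in A} \norm{1_{a X_a} * 1_{a' X_{a'}}}_2 = \sum_{a, a' \in A} E^{1/2}(a X_a, a' X_{a'}). $$
Next I would apply the standard bound $E(B, C) \leq E^{1/2}(B) E^{1/2}(C)$ (itself a Cauchy--Schwarz on $\sum_x r_{B-B}(x)\, r_{C-C}(x)$) to each summand, combined with the dilation-invariance $E(a X_a) = E(X_a)$, to obtain
$$ \sum_{a, a' \in A} E^{1/2}(a X_a, a' X_{a'}) \leq \sum_{a, a' \in A} E^{1/4}(X_a) E^{1/4}(X_{a'}) = \left( \sum_{a \in A} E^{1/4}(X_a) \right)^2. $$
Finally the elementary inequality $(\sum_{a \in A} t_a)^2 \leq |A| \sum_{a \in A} t_a^2$ applied to $t_a := E^{1/4}(X_a)$ yields the claim with $\psi = |A|$.

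There is no real obstacle: every step is either a well-known inequality or an immediate application of the definitions from the Notation section. The only subtlety is that the union defining $Y$ need not be assumed disjoint, but this is absorbed by the pointwise bound $1_Y \leq \sum_a 1_{a X_a}$ and the monotonicity of convolution on non-negative functions. In particular the hypothesis $(a, X_{a'}) = 1$ in the definition of a separating set is never used, consistent with the fact that the content of the paper lies in improving the constant $\psi$ to something sublinear in $|A|$ under that coprimality assumption.
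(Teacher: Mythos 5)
Your proof is correct and is essentially the paper's own argument: the claim simply records the computation in (\ref{eq2})--(\ref{eq3}), namely the triangle inequality in $L^2$ for $1_Y*1_Y$, the bound $E(B,C)\leq E^{1/2}(B)E^{1/2}(C)$ with dilation invariance, and Cauchy--Schwarz on the resulting sum. Your extra care with the possibly non-disjoint union via $1_Y \leq \sum_a 1_{aX_a}$ is a minor (and welcome) tightening, not a different route.
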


One may wonder if there exist sets with a good separation factor $\psi$. Here is the first motivating example when this is indeed the case. Let $p$ be a prime and $a_i = p^{k_i}$ for some $k_i \geq 0$ and 
$$
A = \bigcup_{1 \leq i \leq N} \{ a_i \}.
$$
Assume 
$$
Y = \bigcup_{1 \leq i \leq N} a_i X_i
$$
for some integer sets $X_i$ with $(p, X_i) = 1$. If now 
$(y_1, y_2, y_3, y_4) \in Y^4$ with
\beq \label{eq:energyquadruble}
	y_1 + y_2 = y_3 + y_4
\eeq 
both sides must divide the same power of $p$, so at least two of the elements $y_1, y_2, y_3, y_4$ must be in the same slice $a_iX_i$ for some $i$.  There are six possible cases for which two of $y_1, y_2, y_3, y_4$ belong to $a_iX_i$. Writing $r_{a_iX_i - a_iX_i}(x)$ (resp. $r_{Y - Y}(x)$) for the the number of ways to represent $x$ as a difference of two elements in $a_iX_i$ (resp. $Y$), we can bound the number of quadruples (\ref{eq:energyquadruble}) for given $i$ as either
$$
\sum_x r_{a_iX_i - a_iX_i}(x) r_{Y - Y} (x)
$$
or
$$
\sum_x r_{a_iX_i + a_iX_i}(x) r_{Y + Y} (x)
$$
(with the obvious modification of notation) depending on the case. Summing up, we have ($\pm$ means either plus or minus depending on the case)
\ben
E(Y) &\leq&  \sum_i \sum_x r_{a_iX_i \pm a_iX_i}(x) r_{Y \pm Y} (x) \label{eq:changfirststep} \\
		&\leq&  \sum_i \left(\sum_x r^2_{a_iX_i \pm a_iX_i}(x)\right)^{1/2} \left(\sum_x r^2_{Y \pm Y}(x)\right)^{1/2} \nonumber \\
		&=& 6 E^{1/2}(Y) \sum_i E^{1/2}(X_i) \nonumber,
\een
so
\beq \label{eq:oneprimeseparation}
E^{1/2}(Y) \leq 6 \sum_i E^{1/2}(X_i),
\eeq
which means that $A$ is $6$-separating. 

%Further, one can iterate the inequality above so that if $a_i = \prod_{1 \leq j \leq k} p^{\alpha_{i,j}}_j$, that is, all prime factors of $a_i$ are in $\{p_1, \ldots, p_k\}$ and $(p_1\ldots p_k, %X_i) =1$ then
%\beq \label{eq:primes}
%E^{1/2}(Y) \leq 6^k \sum_i E^{1/2}(X_i).
%\eeq

In what follows it will be convenient to use the \emph{prime valuation}  map which is defined as follows. Let $A$ be a rational set with the elements (after all possible cancellation in numerators and denominators) having prime factors in the set $\{ p_i \}, i \in I$. The map $\mathcal{P}_{I} : A \to \mathbb{Z}^{I}$ maps $\prod_{i \in I}p^{\alpha_i}_i$ to $(\alpha_{1}, \ldots, \alpha_{|I|})$.  It is clear, however, that since all our sets are finite there  always exists a large enough index set $I$ such that $\mathcal{P}_{I}$ is well-defined for all sets in question and is injective. We will therefore assume that this large index set is fixed and omit the subindex $I$ in $\mathcal{P}_I$ when the actual index set is not important.

Let $A$ be a finite-dimensional vector space $V$. Recall that $\mathrm{rank}(A)$ is defined as the minimal $d$ such that $A$ is contained in an affine subspace of $V$ of dimension $d$.  Next, define \emph{multiplicative dimension} of a set $A$ simply as $\mathrm{rank}(\mathcal{P}(A))$. Of course, $\mathbb{Z}^{I}$ is not a linear space since $\mathbb{Z}$ is not a field, so one should consider $\mathcal{P}(A)$ as a set naturally embedded into an ambient linear space over $\mathbb{Q}$ (or $\mathbb{R}$, which makes no difference in our case).

Recall the following lemma due to Freiman.

\begin{theorem}[Freiman's Lemma, \cite{TaoVu} Lemma 5.13] \label{thm:FreimanLemma} Let $A$ be a finite subset of a finite-dimensional space $V$ and suppose $\mathrm{rank}(A) = m$. Then
$$
	|A+A| \geq (m+1)|A| - \frac{m(m+1)}{2}.
$$
\end{theorem}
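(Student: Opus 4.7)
My plan is to induct on the affine rank $m$. Translating $A$ leaves $|A+A|$ unchanged, so I may assume $0\in A$ and work in $V:=\mathrm{span}(A)\cong\mathbb{R}^m$. The base case $m=0$ is trivial, and $m=1$ is the classical chain bound $|A+A|\ge 2|A|-1$ for $A\subset\mathbb{R}$, obtained from the strictly increasing list $a_1{+}a_1<a_1{+}a_2<\cdots<a_1{+}a_n<a_2{+}a_n<\cdots<a_n{+}a_n$ after $\ell$-ordering $A$.

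For the inductive step I would pick a linear functional $\ell\colon V\to\mathbb{R}$ which is injective on $A$ and attains its unique maximum at an extreme point $a^\ast$ of the convex hull of $A$, chosen so that $A':=A\setminus\{a^\ast\}$ lies in an affine hyperplane $H$, i.e.\ $\mathrm{rank}(A')=m-1$. Writing $H=h_0+W$ for the underlying linear hyperplane $W$, the sumsets $A'+A'\subset 2h_0+W$, $a^\ast+A'\subset a^\ast+h_0+W$, and $\{2a^\ast\}\subset 2a^\ast+W$ lie in three pairwise distinct parallel translates of $W$ (this uses $a^\ast\notin H$), so the $|A|$ sums $\{a^\ast+a:a\in A\}$ are entirely disjoint from $A'+A'$. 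Combining with the inductive bound $|A'+A'|\ge m(n-1)-\binom{m}{2}$ would give
\[
|A+A|\;\ge\;m(n-1)-\binom{m}{2}+n\;=\;(m+1)n-\binom{m+1}{2},
\]
as required.

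The delicate point, which I expect to be the main obstacle, is that such an essential extreme point $a^\ast$ need not exist: in ``general position'' configurations such as $\{(0,0),(1,0),(0,1),(2,2)\}\subset\mathbb{R}^2$, removing any single vertex leaves the rank unchanged at $m$, and the clean hyperplane reduction above collapses. To handle these configurations I would nest a secondary induction on the excess $n-(m+1)\ge 0$, with base case $n=m+1$ corresponding to an affine simplex for which a direct enumeration produces exactly $\binom{m+2}{2}=(m+1)n-\binom{m+1}{2}$ distinct pairwise sums. In the inductive step, when every extreme point is inessential, one still removes an extreme $a^\ast$ and must exhibit at least $m+1$ sums of the form $a^\ast+a$ lying outside $A'+A'$; extremality immediately yields $2a^\ast\notin A'+A'$ (since $a^\ast$ is not a midpoint of two points of $A'$), but squeezing the remaining $m$ new sums out of $a^\ast+A'$ by invoking the affine independence of a well-chosen basis inside $A'$ is where I expect most of the technical work to live.
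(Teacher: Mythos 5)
The paper never proves this statement---it is quoted directly from Tao--Vu (Lemma 5.13)---so your attempt must stand on its own, and as written it does not: it has an acknowledged but genuine gap. The parts you do carry out are fine. When there is an \emph{essential} extreme point $a^\ast$ (removal drops the rank to $m-1$), the three cosets of $W$ are indeed pairwise distinct over $\mathbb{R}$, all $n$ sums $a^\ast+A$ avoid $A'+A'$, and the arithmetic $m(n-1)-\binom{m}{2}+n=(m+1)n-\binom{m+1}{2}$ checks out; the simplex base case $n=m+1$ is also correct, since affine independence forces all $\binom{m+2}{2}$ pairwise sums to be distinct. But the case you defer---every extreme point inessential---is not a technical leftover; it is the whole content of Freiman's lemma, and the device you gesture at (``affine independence of a well-chosen basis inside $A'$'') does not suffice: for an affinely independent $m$-tuple $a_1,\dots,a_m\in A'$ there is in general no reason why any $a^\ast+a_i$ should lie outside $A'+A'$, so extremality alone hands you only the single new sum $2a^\ast$.

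The missing idea is to exploit the supporting \emph{cone} at the vertex rather than a basis. Translate so that $a^\ast=0$ and fix a linear functional $\ell$ with $\ell>0$ on $A\setminus\{0\}$ (it exists because $a^\ast$ is a vertex of the hull). The convex cone generated by $A$ is polyhedral, pointed and $m$-dimensional, hence has at least $m$ extreme rays, and each extreme ray contains a point of $A$. On each extreme ray choose the point $x\in A$ with $\ell(x)$ minimal; if $x=b+c$ with $b,c\in A'=A\setminus\{0\}$, then since an extreme ray is a one-dimensional face both $b$ and $c$ lie on that same ray, so $\ell(b),\ell(c)\ge\ell(x)>0$ and $\ell(x)=\ell(b)+\ell(c)\ge 2\ell(x)$, a contradiction. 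Together with $2a^\ast\notin A'+A'$ this exhibits at least $m+1$ elements of $a^\ast+A$ outside $A'+A'$, and then your secondary induction (which is really just induction on $|A|$ with rank fixed) closes exactly as you planned: $|A+A|\ge (m+1)(n-1)-\binom{m+1}{2}+(m+1)=(m+1)n-\binom{m+1}{2}$. Until this step (or an equivalent substitute) is supplied, the proposal is an outline of the easy cases rather than a proof.
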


\begin{corollary} \label{corollary:Chang}
	Let $A \subset \mathbb{Z}$. Assume $|AA| \leq K|A|$. Then $A$ is $6^K$-separating.
\end{corollary}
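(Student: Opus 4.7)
The plan is to combine Freiman's Lemma (Theorem \ref{thm:FreimanLemma}) with the one-prime separation bound \eqref{eq:oneprimeseparation} via induction on the multiplicative dimension $m := \mathrm{rank}(\mathcal{P}(A))$, showing that any integer set of multiplicative dimension $m$ is $6^m$-separating. Since the map $\mathcal{P}$ is injective and intertwines products with sums, the hypothesis $|AA| \leq K|A|$ translates to $|\mathcal{P}(A) + \mathcal{P}(A)| \leq K|\mathcal{P}(A)|$, and Freiman's Lemma then yields
\[
(m+1)|A| - \tfrac{m(m+1)}{2} \leq K|A|,
\]
which bounds $m$ in terms of $K$; in the remaining regime where $|A|$ is small in terms of $K$, the trivial separation bound (Claim \ref{claim:trivialseparation}) already suffices, since $|A| \leq 6^K$.

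For the induction, the base case $m = 0$ is a singleton and is trivial. For the inductive step, given $A$ with multiplicative dimension $m \geq 1$ and any collection $\{X_a\}_{a \in A}$ satisfying $(a, X_{a'}) = 1$ for all $a, a' \in A$, pick a prime $p$ on which the $p$-adic valuation $v_p$ is non-constant on $A$ (guaranteed by $m \geq 1$). Partition $A = \bigsqcup_k L_k$ by $v_p$-level, and set $B_k := L_k/p^k$ so that $L_k = p^k B_k$ with $B_k$ coprime to $p$. Defining $Z'_k := \bigcup_{b \in B_k} b \cdot X_{p^k b}$, one obtains $Y := \bigcup_{a \in A} a X_a = \bigcup_k p^k Z'_k$ with each $Z'_k$ coprime to $p$ (coprimality of the $X$-sets to $p$ follows from the existence of some $a \in A$ divisible by $p$). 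Applying the one-prime bound \eqref{eq:oneprimeseparation} yields $E^{1/2}(Y) \leq 6 \sum_k E^{1/2}(Z'_k)$. Since $v_p$ varies on $A$, the affine span of $\mathcal{P}(A)$ is not contained in any hyperplane $\{x_p = \mathrm{const}\}$, so each slice $L_k$ and hence each $B_k$ has multiplicative dimension at most $m-1$. The coprimality condition $(b, X_{p^k b'}) = 1$ for $b, b' \in B_k$ is inherited from the original hypothesis applied to $p^k b, p^k b' \in A$, so the inductive hypothesis applies and delivers
\[
E^{1/2}(Z'_k) \leq 6^{m-1} \sum_{a \in L_k} E^{1/2}(X_a).
\]
Summing over $k$ closes the induction with a factor $6^m$, which combined with the Freiman bound on $m$ gives $6^K$-separation.

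The main delicacies are (a) ensuring that the multiplicative dimension actually drops by one under slicing, which hinges on choosing $p$ so that $v_p$ is non-constant on $A$ (thereby forcing the affine span of $\mathcal{P}(A)$ to project nontrivially off the $p$-axis), and (b) verifying that the coprimality hypothesis propagates correctly to each slice $B_k$. Both reduce to straightforward arithmetic once the setup is in place, so the substantive content is the marriage of Freiman's classical lemma (which supplies control of the multiplicative dimension) with the elementary one-prime separation argument (which drives the induction).
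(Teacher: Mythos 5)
Your proof is correct, but it is organized differently from the paper's. The paper uses Freiman's Lemma to place $\mathcal{P}(A)$ in an affine subspace of dimension at most $K$, then invokes linear algebra to select at most $K$ primes $p_1,\dots,p_{|I|}$ on which $\mathcal{P}_I$ is injective, writes each $a = x_a\prod p_i^{\alpha_i}$ with the $x_a$ distinct, and applies the one-prime bound \eqref{eq:oneprimeseparation} once per selected prime; you instead prove by induction on the multiplicative dimension $m$ that any set of dimension $m$ is $6^m$-separating, peeling off one prime per step by slicing along the $p$-adic valuation and using the hyperplane-intersection argument to drop the dimension. The ingredients (Freiman's Lemma plus iterated one-prime separation) are the same, but your induction makes precise the paper's rather informal ``repeatedly apply \eqref{eq:oneprimeseparation} $|I|$ times'' step, and your two-regime analysis addresses a point the paper glosses over: $|AA|\le K|A|$ does not by itself force $\mathrm{rank}(\mathcal{P}(A))\le K$ when $|A|$ is small (e.g.\ a set of $n$ distinct primes has $K\approx n/2$ but rank $n-1$). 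Your coprimality bookkeeping is sound: since $v_p$ is non-constant on $A\subset\mathbb{Z}$, some $a\in A$ is divisible by $p$, so every $X_{a'}$ is coprime to $p$, and $(b, X_{p^k b'})=1$ is inherited from $(p^k b, X_{p^k b'})=1$. The one step you assert without proof is that in the exceptional regime $|A|\le 6^K$; this is a one-line computation worth including: if $m\ge K$, Freiman's Lemma gives $(m+1-K)|A|\le m(m+1)/2$, and combining this with $m\le |A|-1$ (write $u=m+1-K\ge 1$, so $|A|\le \frac{u}{2}+\frac{2K-1}{2}+\frac{K(K-1)}{2u}$ with $u\le |A|-K$) yields $|A|\le K^2-1\le 6^K$, so Claim \ref{claim:trivialseparation} indeed finishes that case.
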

\begin{proof}
Observe that $\mathcal{P}(AA) = \mathcal{P}(A) + \mathcal{P}(A)$ and thus $\mathcal{P}(A)$ is contained in an affine subspace of dimension at most $K$ by Freiman's Lemma. Then, by linear algebra, there exists an index set $I$ of size at most $K$ such that the map $\mathcal{P}_I: A \to \mathbb{Z}^I$ is injective. In other words, there are at most $K$ primes $p_1, \ldots, p_{|I|}$ such that each $a \in A$ can be written as (the powers $\alpha_i$ depend on $a$)
$$
a = x_a \prod^{|I|}_{i = 1} p^{\alpha_i}_i
$$
and $x_a, a \in A$ are all distinct. If we then take an arbitrary set 
$$
Y := \bigcup_{a \in A} aY_a
$$
with $(a, Y_{a'}) = 1$ for any $a, a' \in A$, we can expand 
$$
Y = \bigcup_{a \in A}  x_aY_a \prod^{|I|}_{i = 1} p^{\alpha_i}_i.
$$ 
By construction of the map $\mathcal{P}_I$ and the condition $(a, Y_{a'}) = 1$, we conclude that $x_aY_a$ don't have prime factors among $p_1, \ldots, p_{|I|}$ and thus we can repeatedly apply (\ref{eq:oneprimeseparation}) $|I|$ times for each $p_i$. It follows that
$$
E^{1/2}(Y) \leq 6^K \sum_i E^{1/2}(x_aY_a) = 6^K \sum_i E^{1/2}(Y_a),
$$
which means that $A$ is $6^K$-separating.

\end{proof}

The argument above is due to Chang and immediately implies  the Erd\H{o}s-Szemer\'edi conjecture for small $K$.
\begin{theorem}[Chang, \cite{Chang}]
	Assume $A \subset \mathbb{Z}$ with $|AA| \leq K|A|$. There is $c > 0$ such that 
	\beq \label{eq:ChangEnergy}
		E_+(A) \leq c^K |A|^2.	
	\eeq
	In particular there is $c > 0$ such that 
	\beq \label{eq:ChangSumset}
		|A+A| \geq c^{-K}|A|^2.	
	\eeq
\end{theorem}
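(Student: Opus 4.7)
The plan is to read off \eqref{eq:ChangEnergy} essentially for free from Corollary \ref{corollary:Chang}, and then to extract \eqref{eq:ChangSumset} via the standard Cauchy--Schwarz relationship between additive energy and sumset size.

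For the energy bound, the key observation is that Corollary \ref{corollary:Chang} guarantees that $A$ itself is $6^K$-separating, and the definition of separation is general enough to accept the \emph{trivial} decomposition $A = \bigcup_{a \in A} a \cdot \{1\}$. Concretely, I take $X_a := \{1\}$ for every $a \in A$. The coprimality requirement $(a, X_{a'}) = 1$ holds vacuously since $\gcd(a, 1) = 1$, and each $X_a$ is a singleton so $E^{1/2}(X_a) = 1$. Applying \eqref{eq:psibound} with $\psi = 6^K$ then yields
\[
E_+^{1/2}(A) \;=\; E^{1/2}\Bigl(\bigcup_{a \in A} a X_a\Bigr) \;\leq\; 6^K \sum_{a \in A} E^{1/2}(X_a) \;=\; 6^K |A|,
\]
so $E_+(A) \leq 36^K |A|^2$, which is \eqref{eq:ChangEnergy} with $c = 36$.

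For the sumset bound, I invoke the usual lower bound on $|A+A|$ in terms of additive energy: by Cauchy--Schwarz applied to $\sum_{x \in A+A} 1_A * 1_A(x) = |A|^2$, one has
\[
|A|^4 \;\leq\; |A+A| \cdot \sum_{x} (1_A * 1_A)^2(x) \;=\; |A+A| \cdot E_+(A).
\]
Combined with the energy bound just obtained, this gives $|A+A| \geq |A|^4 / E_+(A) \geq 36^{-K} |A|^2$, establishing \eqref{eq:ChangSumset}.

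There is no real obstacle here: all the work has been absorbed into Corollary \ref{corollary:Chang}, which in turn rests on Freiman's Lemma together with the one-prime separation bound \eqref{eq:oneprimeseparation}. The only thing to double-check is that the trivial choice $X_a = \{1\}$ is legitimate in the definition of $\psi$-separation, but since singletons are integer sets and the coprimality condition is automatic, this is immediate.
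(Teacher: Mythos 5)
Your proof is correct and is exactly the paper's argument: the paper likewise takes the singleton fibers $Y_a = \{1\}$ in Corollary \ref{corollary:Chang} to get the energy bound and then deduces \eqref{eq:ChangSumset} from \eqref{eq:ChangEnergy} by Cauchy--Schwarz. You have merely spelled out the two steps the paper leaves implicit, and your details (the vacuous coprimality check and the bound $|A|^4 \leq |A+A|\,E_+(A)$) are fine.
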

\begin{proof}
Take $Y_a = \{ 1 \}$ for each $a \in A$ in the argument above. The bound (\ref{eq:ChangSumset}) follows from (\ref{eq:ChangEnergy}) by Cauchy-Schwarz.
\end{proof}

Chang's theorem gives a non-trivial bound only in the regime $K \ll \log |A|$. Now we turn to the case when $K$ can be as large as some small power of $|A|$.

 Let us start with an heuristic argument which rests on somewhat unrealistic assumptions but reveals the structure of the upcoming proof. Assume that there is a way to  decompose $\mathcal{P}(A)$ into a direct sum, such that 
 \beq \label{eq:sumdecomposition}
 \mathcal{P}(A) = \mathcal{P}(A_1) \oplus  \mathcal{P}(A_2)
 \eeq 
 for some sets $A_1, A_2 \subset \mathbb{N}$. Since $\mathcal{P}(A_1)$ and $\mathcal{P}(A_2)$ are orthogonal, we then have 
 \ben
 |\mathcal{P}(A)| &=& |\mathcal{P}(A_1)| |\mathcal{P}(A_2)|  \label{eq:productdecomposition}\\
 |\mathcal{P}(A) + \mathcal{P}(A)| &=& |\mathcal{P}(A_1) + \mathcal{P}(A_1)| |\mathcal{P}(A_2)+ \mathcal{P}(A_2)|  \label{eq:doubling}
\een

If we define $K_1 := |A_1A_1|/|A_1|$ and $K_2 :=  |A_2A_2|/|A_2|$ then (\ref{eq:productdecomposition}) and (\ref{eq:doubling})  give
\beq
	K_1K_2|A| = K_1K_2 |A_1||A_2| \leq |AA| \leq K|A|  \label{eq:K_1K_2K}
\eeq
so $K_1K_2 \leq K$. 

Setting $|A| = N$, assume further that $|A_1| \approx |A_2| \approx N^{1/2}$ and, moreover, that $\mathcal{P}(A_1)$ and  $\mathcal{P}(A_2)$ can be iteratively decomposed further into direct sums in a similar way. In other words, we assume that for any $l \ll \log \log A$ there is a decomposition 
\ben
\mathcal{P}(A) = \bigoplus^{2^l}_{i = 1}  \mathcal{P}(A_i)\\ \label{eq:AdecompositionAi}
|A_i| \approx N^{1/2^l} \label{eq:Aisizes}
\een
which, iterating (\ref{eq:K_1K_2K}) and taking logarithms, gives
\beq \label{eq:logKs}
	\sum^{2^l}_{i = 1} \log K_i  \leq \log K,
\eeq
where $K_i :=  |A_iA_i|/|A_i|$. Take $l = \lfloor \log \log K \rfloor $, fix an arbitrary (large) constant $C > 0$ and let 
$$
I := \{ i : K_i > C\}.
$$ 
By (\ref{eq:logKs}) we have 
$$
|I| \leq \frac{1}{C} \log K,
$$
so the size of the set  
$$
A' := \prod_{i \in I} A_i 
$$
is at most $N^{1/C}$ by (\ref{eq:Aisizes}). We can rewrite (\ref{eq:AdecompositionAi}) as
$$
	A = \prod_{i \notin I} A_i A' = \bigcup_{\stackrel{a_i \in A_i}{i \notin I}} \left( \prod a_i \right) A',
$$
and an iterative application of Corollary \ref{corollary:Chang} for $A_i, i \notin I$ gives (sum is over all elements in $\prod_{i \notin I} A_i$)
\beq \label{eq:iterationChang}
	E^{1/2}(A) \leq  \left( \prod_{i \notin I} 6^C \right) \sum E^{1/2} (A' ) \leq K^{10C} |A'|^{3/2} \prod_{i \notin I } |A_i|  . 
\eeq
so
$$
E(A) \leq K^{20C} |A'| |A|^2 \leq K^{20C} N^{1/C} |A|^2.
$$
Taking $C > 0$ large enough we recover the claim of Theorem \ref{thm:maintheorem}.

Of course, the assumption (\ref{eq:sumdecomposition}) is too strong to be true and one can easily come up with examples of sets which cannot be decomposed into a direct sum. However, in order to iterate Corollary \ref{corollary:Chang} in (\ref{eq:iterationChang}) it suffices that the set $\mathcal{P}(A)$ "fibers" into sets with controlled separating constants, which is a much weaker assumption than (\ref{eq:sumdecomposition}). 

The claim below illustrates this observation.

%. It turns out that the separation constants can be nested under a much weaker

% In what follows we switch to the additive notation by assuming that all the sets have prime factors in a finite set of primes $\mathcal{P}_0$ so after the prime power projection %$\mathcal{P}$ we are left with additive sets lying in $\mathcal{R}_0 := \prod_{p \in \mathcal{P}_0}\mathbb{Z}_{\geq 0}$. In particular, we have $|\mathcal{P}(A) + \mathcal{P}(A)| \leq K|%A|$.
 
% The claim below is motivated by the structure of the proof which is an iterative decompostion of the original set $A$. 

\begin{claim} \label{claimPsi}
Assume that a set $A$ decomposes as
\ben  
A = \bigcup_{b_i \in B} b_i C_i,  \label{eq:Adecomposition}
\een so that $(b_i, c_j) = 1$ for any $b_i \in B, c_j \in C_j$ (this means that $\mathcal{P}(b_i)$ and $\mathcal{P}(C_j)$ are orthogonal). 
Assume also that $B$ is $\psi_1$-separating and $C_i$ is $\psi_2$-separating for each $i$. Then $A$ is $\psi_1\psi_2$-separating.
\end{claim}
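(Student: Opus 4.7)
The plan is to unfold the double decomposition of $A$, apply $\psi_1$-separation of $B$ at the outer level and then $\psi_2$-separation of each $C_i$ at the inner level, checking at each stage that the coprimality hypothesis needed in the definition of separation is inherited from the global coprimality hypothesis on $\{X_a\}_{a\in A}$.

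More concretely, fix any family of integer sets $\{X_a\}_{a \in A}$ satisfying $(a, X_{a'}) = 1$ for all $a, a' \in A$, and form $Y = \bigcup_{a \in A} a X_a$. Since every $a \in A$ is uniquely of the form $b_i c$ with $b_i \in B$, $c \in C_i$ (uniqueness follows from $(b_i, C_j)=1$ which implies the union in \eqref{eq:Adecomposition} is disjoint), one can regroup
\[
Y \;=\; \bigcup_{b_i \in B} b_i\, Y_i, \qquad \text{where } Y_i \;:=\; \bigcup_{c \in C_i} c\, X_{b_i c}.
\]
To apply the $\psi_1$-separation of $B$ to the outer decomposition, one has to verify that $(b_i, Y_{i'}) = 1$ for all $b_i, b_{i'} \in B$. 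Elements of $Y_{i'}$ have the form $c\,x$ with $c \in C_{i'}$ and $x \in X_{b_{i'}c}$; the factor $c$ is coprime to $b_i$ by hypothesis, and $x$ is coprime to $b_i$ because $(b_{i'}c, x) = 1$ forces $(b_i, x) = 1$ (as $b_i$ and $b_{i'}c$ share no prime factors with $X_{b_{i'}c}$, noting $b_i \mid a$ for some $a \in A$). Hence the outer application of separation yields
\[
E^{1/2}(Y) \;\leq\; \psi_1 \sum_{b_i \in B} E^{1/2}(Y_i).
\]

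For the inner step, fix $i$ and apply the $\psi_2$-separation of $C_i$ to $Y_i = \bigcup_{c \in C_i} c\, X_{b_i c}$. The required coprimality $(c, X_{b_i c'}) = 1$ for $c, c' \in C_i$ is automatic: the global assumption gives $(b_i c', X_{b_i c}) = 1$, which in particular gives $(c', X_{b_i c}) = 1$, and by symmetry $(c, X_{b_i c'}) = 1$. Therefore
\[
E^{1/2}(Y_i) \;\leq\; \psi_2 \sum_{c \in C_i} E^{1/2}(X_{b_i c}).
\]
Substituting this bound into the outer one and using that the double sum $\sum_{b_i \in B} \sum_{c \in C_i}$ reindexes exactly $\sum_{a \in A}$ via $a = b_i c$, one obtains
\[
E^{1/2}(Y) \;\leq\; \psi_1 \psi_2 \sum_{a \in A} E^{1/2}(X_a),
\]
which is the desired $\psi_1 \psi_2$-separation of $A$.

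The computation itself is essentially bookkeeping; the only real content is the coprimality verification, so the main (mild) obstacle is making sure the coprimality conditions propagate correctly from the global hypothesis $(a, X_{a'})=1$ to both the outer family $\{Y_i\}$ (indexed by $B$) and each inner family $\{X_{b_i c}\}$ (indexed by $C_i$). Once this is spelled out, the claim is just two nested applications of the definition of separating sets.
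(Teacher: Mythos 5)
Your proposal is correct and follows essentially the same route as the paper: regroup $Y$ as $\bigcup_{b_i \in B} b_i \bigl(\bigcup_{c \in C_i} c X_{b_i c}\bigr)$, apply $\psi_1$-separation of $B$ outside and $\psi_2$-separation of each $C_i$ inside, and reindex the double sum over $a \in A$. The only difference is that you spell out the propagation of the coprimality hypotheses, which the paper compresses into the remark that $(b_i c_j, X_{i,j}) = 1$ is used twice.
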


\begin{proof}
Let
$$
	Y = \bigcup_{a \in A} aX_a. 
$$
where $X_a$  are some sets with $(a, X_{a'}) = 1$ for  $a, a' \in A$.
Then we can write
$$
Y = \bigcup_{b_i \in B} b_i \bigcup_{c_j \in C_i} c_jX_{i, j}
$$
Since $B$ is $\psi_1$-separating and each $C_i$ is  $\psi_2$-separating we have
$$
E^{1/2}(Y) \leq \psi_1 \sum_{b_1 \in B} E^{1/2}(\bigcup_{c_j \in C_i} c_jX_{i, j}) \leq \psi_1 \psi_2 \sum_{i, j} E^{1/2}(X_{i, j}) = \psi_1 \psi_2 \sum_{a \in A}E^{1/2}(X_a), 
$$ 
and thus $A$ is $\psi_1 \psi_2$-separating. Note that we have used the fact  
$$
(b_ic_j, X_{i, j}) = 1
$$ twice. 

\end{proof}

Note that  the sets $C_i$ in (\ref{eq:Adecomposition}) may depend on $b_i$, which makes it a more lax assumption than (\ref{eq:sumdecomposition}).

Another crucial ingredient in the model case above is the reduction of the doubling constants (\ref{eq:K_1K_2K}) which is then iterated $\approx \log \log K$ times so that most of the $K$'s  shrink to the scale at which Corollary \ref{corollary:Chang} gives non-trivial results. The next section is fully devoted to a technical lemma which is later used for a similar iterative scheme (see in particular (\ref{eq:FiberingLemmaDoublingConstants})). It seems hard however to guarantee a full analog  (\ref{eq:K_1K_2K}) to hold for arbitrary fibered sets decomposed as (\ref{eq:Adecomposition}). Instead, Bourgain and Chang devised a scheme where the doubling constants are replaced with doubling constants along some graph of density $\delta$. By introducing an additional parameter, the graph density $\delta$ (which, as one can check, never goes below $N^{-o(1)}$), they were able to close the induction.

% Of course, it may well be the case that there are no such sets $A_1, A_2$.  The main insight of Bourgain and Chang is that if $|AA| \leq K|A|$ then, up to logarithmic losses, a subset of $A$ is contained in a product set $A_1A_2$ with $A_1, A_2$ having the desired properties.  Moreover, in order to make further iterations, one has to control the multiplicative structure of        $A_1$ and $A_2$. It turns out that it's hard to guarantee that $A_1$ and $A_2$ have small multiplicative doubling. Instead, another auxiliary parameter $\delta$
% is introduced, such that $A_1$ and $A_2$ are multiplicatively small along some graph of density $\delta$ of order $\log^{-C} |A|$ for some $C > 0$. The precise definition is below.
 
%\begin{defi}[Admissible pairs]
%	A pair of functions $\psi(N, \delta, K), \phi(N, \delta, K)$ is called \emph{admissible} if the following property holds for any two sets $A_1, A_2 \subset \mathcal{R}_0$ of sizes $N_1, %N_2$ with $N := N_1N_2$. 
	
	%If for some graph $G \subset A_1 \times A_2$ with $|G| = \delta N$ holds
	%$$
	%	|A_1 +_{G} A_2| \geq K\sqrt{N_1N_2}	
	%$$ 
	%then there is a subrgraph $G' \subset G$ with
	%$$
	%	|G'| \geq \phi(N, \delta, K)|G|	
	%$$
	%such that for any $a_1 \in A_1$ (resp. $a_2 \in A_2$) the $G'$-neighborhood $G'(a_1) := \{a_2 \in A_2: (a_1, a_2) \in G' \}$ (resp. $G'(a_2)$) is $\psi(N, \delta, K)$-separating.
	%
%
%\end{defi}

%In the next sections we will apply an inductive argument which produces better and better pairs of admissible functions and eventually leads to our goal.

\section{Fibering Lemma} 
This section is devoted to the proof of a structural lemma which is a key ingredient in the inductive step. In fact, the proof works for subsets of linear spaces over any field\footnote{In fact, we need only the structure of a module. The results of this section will be applied later on only for subsets of $\mathbb{Z}^{[n]}$ (viewed as sets in the ambient linear space  $\mathbb{Q}^{[n]}$). We have introduced $F$ here to emphasize that Lemma \ref{lm:FiberingLemma} works equally well when the ambient space is $\mathbb{F}^{[n]}_2$, say.}. We assume the sets in question are subsets of $F^{[n]}$ with a coordinate basis $\{ e_i \}^n_{i=1}$ which we assume fixed. For an index set $I \subset [n]$ there is a natural projection $\pi_I : F^{[n]} \to F^I$ which maps $(x_1, \ldots, x_n)$ to $\sum_{i \in I} x_ie_i$, that is, $\pi_I$ is the projection to the coordinates with indices in $I$. 

When we add two elements $x \in F^I$ and $y \in F^J$ we treat them as elements in the ambient space $F^{I \cup J}$ filling the rest of coordinates with zeroes in the obvious way. When $I \cap J = \emptyset$ we write $x \oplus y$ for the sum to emphasize the orthogonality. This notation extends to sets in the obvious way.

\begin{defi}[Graph fibers]
For a partition $I \cup J = [n]$  a bipartite graph $G \subset X \times Y \subset F^{[n]} \times F^{[n]}$ has a \emph{natural fibering}
$$
	\bigcup_ {(x, y) \in G_I} G_{x, y},
$$
where the \emph{base graph} $G_I$ is defined as
$$
	G_I \equiv \{(\pi_I(u), \pi_I(v)) : (u, v) \in G \} \subset \pi_I(X) \times \pi_I(Y)
$$
and a \emph{fiber graph} $G_{x, y} \subset \pi_J(X) \times \pi_J(Y)$  as
$$
	G_{x, y} \equiv \{(x', y') : (x \oplus x', y \oplus y') \in G  \} \subset \pi_J(X) \times \pi_J(Y).
$$
\end{defi}

We will need another bit of notation to denote fibers. For a set $X$ and $x \in \pi_I(X)$ we write, following the original paper, $X(x)$ for the fiber over $x$. Namely, $X(x)$ is defined as
$$
	X(x) := \{ x' \in \pi_J(X) : x \oplus x' \in X \}.
$$

We will repeatedly use the following "cheap regularity" lemma.
\begin{lemma} \label{lemma:Step0}
  Let $G$ be a graph on $X \times Y$ of size $\delta |X||Y|$. Then there exist $X' \subset X, Y' \subset Y$ and $G' \subset G$ such that 
  \ben
 		|N_{G'}(x)| &\geq& \frac{\delta}{4} |Y| \\
		|N_{G'}(y)| &\geq& \frac{\delta}{4} |X| \\
		|X'| &\geq& \frac{\delta}{2}|X|  \label{eq:Xprimesize} \\
		|Y'| &\geq& \frac{\delta}{2}|Y|  \label{eq:Yprimesize} \\
		|G'| &\geq& \frac{\delta}{2} |X||Y| \label{eq:Gprimesize} 
  \een
  for any $x \in X', y \in Y'$. In particular, for any $A \subset X'$ and $B \subset Y'$
  \ben
  	|(A \times Y') \cap G'| &\geq&\frac{\delta}{4}|Y||A| \\
	|(B \times X') \cap G'| &\geq& \frac{\delta}{4}|X||B|.
  \een
\end{lemma}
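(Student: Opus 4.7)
The plan is the standard greedy pruning (iterative low-degree deletion) argument. Initialize $X_0 := X$, $Y_0 := Y$, $G_0 := G$. While there exists a vertex $x \in X_i$ with $|N_{G_i}(x)| < \tfrac{\delta}{4}|Y|$, or a vertex $y \in Y_i$ with $|N_{G_i}(y)| < \tfrac{\delta}{4}|X|$, remove it and all its incident edges to obtain $X_{i+1}, Y_{i+1}, G_{i+1}$. This process terminates in at most $|X|+|Y|$ steps; set $X'$, $Y'$, $G'$ to be the final objects. By construction, every surviving $x \in X'$ has $|N_{G'}(x)| \geq \tfrac{\delta}{4}|Y|$ and every surviving $y \in Y'$ has $|N_{G'}(y)| \geq \tfrac{\delta}{4}|X|$, since otherwise the process would not have stopped.

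Next I would bound the number of edges removed. Each deleted $X$-vertex kills at most $\tfrac{\delta}{4}|Y|$ edges of $G$, and each deleted $Y$-vertex kills at most $\tfrac{\delta}{4}|X|$ edges, so the total number of deleted edges is at most
\[
|X|\cdot\tfrac{\delta}{4}|Y| \;+\; |Y|\cdot\tfrac{\delta}{4}|X| \;=\; \tfrac{\delta}{2}|X||Y|.
\]
Since $|G| = \delta |X||Y|$, this yields $|G'| \geq \tfrac{\delta}{2}|X||Y|$, which is \eqref{eq:Gprimesize}. The size bounds \eqref{eq:Xprimesize} and \eqref{eq:Yprimesize} then follow from the trivial inequalities $|G'| \leq |X'|\cdot|Y|$ and $|G'| \leq |X|\cdot|Y'|$.

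For the ``in particular'' conclusions, observe that $G' \subset X' \times Y'$ by construction (once a vertex is deleted, so are its incident edges), so the neighborhood in $G'$ of any $x \in X'$ is already contained in $Y'$. Hence for $A \subset X'$,
\[
|(A \times Y') \cap G'| \;=\; \sum_{x \in A} |N_{G'}(x)| \;\geq\; |A|\cdot \tfrac{\delta}{4}|Y|,
\]
and symmetrically for $B \subset Y'$. I don't anticipate a real obstacle here --- the only point that deserves care is getting the constants right and verifying that after pruning one side, the degree condition on the other side still holds, which it does because the condition is only strengthened (never weakened) by edge removal on the opposite side; that is why the degree bounds at termination apply to the final $G'$ rather than to some intermediate $G_i$.
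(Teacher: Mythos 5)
Your proposal is correct and follows essentially the same argument as the paper: iteratively delete low-degree vertices, note that at most $\tfrac{\delta}{4}|Y|\cdot|X| + \tfrac{\delta}{4}|X|\cdot|Y| = \tfrac{\delta}{2}|X||Y|$ edges are removed, and deduce \eqref{eq:Xprimesize}, \eqref{eq:Yprimesize} from \eqref{eq:Gprimesize}. Nothing further is needed.
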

\begin{proof}
 Remove from $X$ (resp $Y$) one by one all vertices with degree less than $\delta/4 |Y|$ (resp. $\delta/4 |X|$), until both $X$ and $Y$ contain only vertices of degree at least $\delta/4 |Y|$ (resp. $\delta/4 |X|$) in the remaining graph. Clearly, we cannot remove more than $\delta/2 |X||Y|$ edges no matter how many vertices we remove. Take $X'$ and $Y'$ to be the sets of survived vertices in $X$ and $Y$ respectively and $G' := G \cap (X' \times Y')$.  The bounds (\ref{eq:Xprimesize}) and (\ref{eq:Yprimesize}) follow immediately from (\ref{eq:Gprimesize}).
\end{proof}

Now we can formulate the main lemma of this Section. This is a key ingredient of the original proof and is of independent interest.

\begin{lemma}[Finding a large subset with uniform fibers] \label{lm:FiberingLemma}
	Let $A_1, A_2 \subseteq F^{[n]}$ be subsets of a linear space $V$ over a field $F$ of sizes $N_1, N_2$ respectively. Assume that for some $\delta > 0$ there is a graph $G \subset A_1 \times A_2$ with $|G| = \delta N_1N_2$ such that
	$$
		|A_1 \stackrel{G}{+} A_2| \leq KN_1^{1/2}N_2^{1/2}.	
	$$
	
	Then for any partition $I \cup J = [n]$ there are sets $A'_1 \subset A_1,  A'_2 \subset A_2$ and a subgraph  $G' \subset G$ on $A'_1 \times A'_2$ with the following properties.
		There are numbers $M_1, m_1, M_2, m_2$ and absolute constants $c, C > 0$ with the properties below.
	\begin{enumerate}
		\item(Uniform fiber size) Define $M_1, M_2$ as
		\ben \label{eq:M_1M_2definition}
		|\pi_I(A'_1)| = M_1, \,\,\, |\pi_I(A'_2)| = M_2.
		\een
	 There exist $m_1, m_2 > 0$ such that for any $x \in \pi_I(A'_1), y \in \pi_I(A'_2)$
		\ben \label{eq:FiberingLemmaFiberSizes}
		|A'_1(x)| \approx m_1, \,\,\, |A'_2(y)| \approx m_2
		\een
	  and
	  
	 \ben \label{eq:FiberingLemmaSetSizes}
	 M_1m_1 &\geq& cN_1 \delta^2 \log^{-1}(K/\delta) \\
	 M_2m_2 &\geq& cN_2 \delta^2 \log^{-1}(K/\delta) \\ 
	 m_1, m_2 &\geq& c\delta^{10}K^{-4} \max_{x \in \pi_I(A_1), y \in \pi_I(A_2)} \{ |A_1(x)| + |A_2(y)|\}.
	 \een 
	    
	   \item (Uniform graph fibering) There exist $\delta_1, \delta_2 > 0$ with 
	   \ben \label{eq:FiberingLemmaDeltaSizes}
	   		\delta_1 \delta_2 > c\log^{-3} (\frac{K}{\delta}) \delta.
	   \een
	  such that 	   
	   \ben \label{eq:FiberingLemmaBaseGraphSize}
	   	|G'_I| \geq \delta_1 M_1M_2,
	   \een
	   and for any $(x, y) \in G'_I$ 
	   \ben \label{eq:FiberingLemmaFiberGraphSize}
			|G'_{(x, y)}| \geq \delta_2 m_1m_2.
	   \een
	
	   \item (Bounded doubling) There exist $K_1, K_2 > 0$ 
	   with 
	\ben 	   \label{eq:FiberingLemmaDoublingConstants}
		K_1K_2 \leq CK \log(K)\delta^{-2}.	
	\een 
	such that 
	\ben \label{eq:FiberingLemmaK1}
		|\pi_I(A'_1) \stackrel{G'_I}{+}  \pi_I(A'_2)| = K_1 (M_1M_2)^{1/2}
	\een 
	   and for any $(x, y) \in G'_I$ 
	  \ben \label{eq:FiberingLemmaK2}
		|\pi_J(A'_1) \stackrel{G_{(x, y)}}{+}  \pi_J(A'_2)|	\approx K_2 (m_1m_2)^{1/2}.
	   \een
	
	\end{enumerate}
	
\end{lemma}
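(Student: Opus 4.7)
The plan is to establish the lemma by a sequence of cleaning and dyadic pigeonhole refinements applied to $G$, capped by a direct-sum computation that exploits the orthogonal splitting $F^{[n]} = F^I \oplus F^J$. Since the conclusion tolerates polylogarithmic losses in $K/\delta$ and constant-power losses in $\delta$, essentially every refinement I need can be paid for by a single invocation of Lemma \ref{lemma:Step0} or one dyadic pigeonhole step.

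First I would apply Lemma \ref{lemma:Step0} to $G$ to enforce minimum degrees $\gtrsim \delta N_2$ and $\gtrsim \delta N_1$ on the $A_1$- and $A_2$-sides respectively. Then a pair of dyadic pigeonholes over $x \in \pi_I(A_1)$ and $y \in \pi_I(A_2)$ produces scales $m_1, m_2$ and sets $B_i \subset \pi_I(A_i)$ of size $M_i$ on which $|A_i(x)| \approx m_i$, losing only a factor $\log(K/\delta)$ in total mass; this delivers (\ref{eq:FiberingLemmaFiberSizes}) together with the first two inequalities of (\ref{eq:FiberingLemmaSetSizes}). Restricting $G$ to the preimage of $B_1 \times B_2$ and cleaning once more, a pigeonhole on $|G_{(x,y)}|$ across the base yields a uniform fiber-graph density $\delta_2$ satisfying (\ref{eq:FiberingLemmaFiberGraphSize}), and one further application of Lemma \ref{lemma:Step0} to the base graph produces $\delta_1$ satisfying (\ref{eq:FiberingLemmaBaseGraphSize}). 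The identity $|G| = \sum_{(x,y) \in G_I} |G_{(x,y)}|$ together with the pigeonhole bookkeeping gives (\ref{eq:FiberingLemmaDeltaSizes}).

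A final dyadic pigeonhole over $(x, y) \in G'_I$ according to $|A'_1(x) \stackrel{G'_{(x,y)}}{+} A'_2(y)|$ defines $K_2$ as in (\ref{eq:FiberingLemmaK2}), and $K_1$ is then defined by (\ref{eq:FiberingLemmaK1}). The doubling bound (\ref{eq:FiberingLemmaDoublingConstants}) follows from the orthogonal decomposition: every $z \in A'_1 \stackrel{G'}{+} A'_2$ is uniquely $z = z_I + z_J$ with $z_I \in \pi_I(A'_1) \stackrel{G'_I}{+} \pi_I(A'_2)$, and for any fixed $z_I$ and any $(x, y) \in G'_I$ with $x + y = z_I$ the admissible $z_J$ contain $A'_1(x) \stackrel{G'_{(x,y)}}{+} A'_2(y)$; using uniform fiber sumset sizes this yields
$$|A'_1 \stackrel{G'}{+} A'_2| \geq \frac{1}{2} K_1 K_2 (M_1 M_2 m_1 m_2)^{1/2}.$$
Comparing with $|A'_1 \stackrel{G'}{+} A'_2| \leq K(N_1 N_2)^{1/2}$ and inserting the lower bounds $M_i m_i \gtrsim N_i \delta^2/\log(K/\delta)$ gives $K_1 K_2 \leq CK \log(K/\delta)/\delta^2$ as required.

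The main obstacle I anticipate is the final clause of (\ref{eq:FiberingLemmaSetSizes}), the lower bound $m_1, m_2 \gtrsim \delta^{10} K^{-4}\max\{|A_1(x)|+|A_2(y)|\}$: naive dyadic pigeonhole only selects a scale capturing typical fibers, not necessarily one near the maximal fiber. I expect to handle this by isolating the largest fiber $A_1(x^*)$ of size $M_{\max}$, applying the Pl\"unnecke--Ruzsa inequality to the subgraph induced by $G$ on $\{x^*\}\oplus A_1(x^*)$ together with its $A_2$-neighborhood, and showing that the density $\delta$ and the sumset bound $K(N_1N_2)^{1/2}$ force the typical $A_2$-fiber interacting with $x^*$ to be comparable to $M_{\max}$ up to the stated polynomial losses; the exponents $10$ and $4$ should emerge from tracking the $\delta$-losses of the repeated cleanings and the $K$-losses of the Pl\"unnecke chain, after which a symmetric argument gives the bound for $m_2$.
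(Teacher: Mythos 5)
Your overall architecture (clean with Lemma \ref{lemma:Step0}, pigeonhole fiber sizes, pigeonhole fiber-graph densities, pigeonhole fiber doubling, then the orthogonal product lower bound $|A'_1 \stackrel{G'}{+} A'_2| \gtrsim K_1K_2(M_1M_2m_1m_2)^{1/2}$ against $K(N_1N_2)^{1/2}$) is the paper's architecture, and your Step~4-style computation for (\ref{eq:FiberingLemmaDoublingConstants}) is essentially correct. But there is a genuine gap, and it is not confined to the last clause of (\ref{eq:FiberingLemmaSetSizes}) as you suggest: your claim that the dyadic pigeonholes lose only a factor $\log(K/\delta)$ is unjustified. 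Fiber sizes $|A_i(x)|$ a priori range over $[1,N_i]$, and fiber doubling constants $K_+(G_{(x,y)})$ over a window of ratio roughly $\sqrt{m_1m_2}$, so a naive dyadic decomposition has $\sim\log N$ classes and the pigeonhole costs $\log N$, not $\log(K/\delta)$. Since the lemma is later iterated $\approx 2^{\log\log K}$ times with $\delta$ required to stay above $N^{-o(1)}$, a $\log N$ loss per application is fatal; the statement with $\log^{-1}(K/\delta)$ is what the induction needs, and your argument does not deliver it. The paper's Steps 1, 2 and 4 exist precisely to shrink these ranges to windows of ratio $\mathrm{poly}(K/\delta)$ \emph{before} pigeonholing: one crosses the maximal fiber $A_1(x)$, $|A_1(x)|=n_1$, with representatives of distinct base points of $A'_2$ (the sums are distinct because the $I$ and $J$ coordinates are orthogonal), so the hypothesis forces $n_1|\pi_I(A'_2)|\lesssim K^2\delta^{-2}N_2$; hence fibers of $A_2$ below $\sim\delta^5K^{-2}n_1$ carry a negligible fraction of the mass and can be discarded. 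This single counting argument simultaneously yields the $\log(K/\delta)$ pigeonhole cost, the first two bounds of (\ref{eq:FiberingLemmaSetSizes}), and the maximal-fiber lower bound on $m_2$ that worries you; a symmetric argument (playing a single $A_2$-fiber of size $\approx m_2$ against the base of $A_1$) handles $m_1$, and in Step~4 the same product trick is used to excise the set of base pairs with doubling above $\sim\log^3(K/\delta)\delta^{-10}K$ before the $K_2$ pigeonhole.

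Your proposed patch for the maximal-fiber bound via Pl\"unnecke--Ruzsa does not work as stated: Pl\"unnecke--Ruzsa requires a bound on a full sumset, whereas here one only controls a sumset restricted to the graph $G$, and converting via Balog--Szemer\'edi--Gowers would both change the sets and miss the point --- the relevant comparison is not between the largest fiber of $A_1$ and its own neighborhood, but between the largest fiber of one set and the number of $I$-base points of the other, which is exactly what the orthogonal-sum counting gives for free. A secondary (repairable) issue: you pigeonhole the $A_1$- and $A_2$-fiber scales independently by vertex mass and then restrict $G$ to $B_1\times B_2$; nothing guarantees that this intersection retains a positive proportion of edges. The paper avoids this by performing the refinements sequentially and weighting each pigeonhole by edge mass into the previously refined set (this is why the $\bar A'_2$ refinement is interleaved with degree conditions from Lemma \ref{lemma:Step0}), and you should do the same.
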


\begin{proof}

The proof proceeds in several steps. We will refine $A_1, A_2$ (thus abusing notation) such that eventually all the properties (1)-(3) are satisfied, while keeping track of the losses  with respect to the quantities $\delta, N_1, N_2$ which are fixed and never updated. Without loss of generality we assume $N_1 \geq N_2$. 

The constants $c, C > 0$ are always effective and absolute but may change in the course of the proof. One should think that $c$ is `sufficiently small' and $C$ is `sufficiently large' (though in principle one can evaluate suitable numerical values). 
	
\vspace{1em}
%\noindent \textbf{Step 0. }(Regularizing the graph)
%	
% In what follows it will be convenient to assume that whenever we have a bipartite graph of density $\delta$ on $X \times Y$ (that is, the edge set is of size $\delta|X||Y|$) then 
%$N(x) \geq \delta/4 |Y|$ and $N(y) \geq \delta/4 |Y|$ for any $x \in V_1(G), y \in V_2(G)$. Indeed, we can remove from $X$ (resp $Y$) one by one all vertices with degree less th%an %$\delta/4 |Y|$ (resp. $\delta/4 |X|$), thus removing at most $\delta/2 |X||Y|$ edges, which is clearly permissible. Then we reassign $X$ and $Y$ to these reduced sets. 

%In particular, it follows that $|E(X_1, Y)| \geq \delta' |X_1||Y|$ and $|E(X, Y_2)| \geq \delta' |X_2||Y|$ for any sets $X_1 \subset X$, $Y_2 \subset Y$ with $\delta' = \delta/4$. We then again redefine $\delta := \delta'$ for the sake of notation. %

Applying Lemma \ref{lemma:Step0} with $X = A_1, Y = A_2$ we assume that 
\ben
|A_1| &\geq& \frac{\delta}{2} N_1 \\
|A_2| &\geq& \frac{\delta}{2} N_2 \\
|A_1 \times A_2 \cap G| &\geq&  \frac{\delta}{2} N_1 N_2 \\
\min_{x \in A_1} |x \times A_2 \cap G| &\geq& \frac{\delta}{4} N_2 \label{eq:A_1A_2degree}\\
\min_{y \in A_2} |A_1 \times y \cap G| &\geq& \frac{\delta}{4} N_1 \label{eq:A_2A_1degree}.
\een

\vspace{1em}
\noindent \textbf{Step 1. }(Regularizing the fibers of $A_2$)

Without loss of generality we assume that 
$$
n_1 := \max_{x \in \pi_I(A_1)} |A_1(x)|  \geq \max_{y \in \pi_I(A_2)}  |A_2(y)| .
$$ 
Let $x \in \pi_I(A_1)$ such that $|A_1(x)| = n_1$. We have  $|(x, A_1(x)) \times A_2 \cap G| \geq \frac{\delta}{4} |A_2|n_1$ so we choose using Lemma \ref{lemma:Step0} a subset $A'_2 \subset A_2$ such that 
\beq \label{eq:A_2n_1degree}
	|(x, A_1(x)) \times z \cap G| \geq \frac{\delta}{8} n_1
\eeq
for any $z \in A'_2$.  Also, 
\ben
|A'_2| &\geq& \frac{\delta}{8}|A_2| \geq \frac{\delta^2}{16} N_2 \label{eq:A_2size} \\
|A_1 \times A'_2 \cap G| &\geq& \frac{\delta}{4} N_1|A'_2| \label{eq:A_1A_2density},
\een
since any vertex in $A'_2$ has degree at least $\frac{\delta}{4} N_1$.

We then claim that 
\beq 
 |(x, A_1(x)) \stackrel{G}{+} A'_2| \geq \frac{\delta}{8} n_1 |\pi_I(A'_2)|.
\eeq
Indeed, let 
$$
\{z^{(i)}_I \oplus z^{(i)}_J \}^{|\pi_I(A'_2)|}_{i=1}  \subset A'_2
$$  
be a collection of elements of size $|\pi_I(A'_2)|$ such that $z^{(i)}_I$ are all distinct. 
All the sums 
$$
(x, A_1(x)) + (z^{(i)}_I \oplus z^{(i)}_J)   = (z^{(i)}_I + x) \oplus (z^{(i)}_J + A_1(x))
$$ are distinct, and by (\ref{eq:A_2n_1degree}) at least $\frac{\delta}{8} n_1 |\pi_I(A'_2)|$ of them are in $(x, A_1(x)) \stackrel{G}{+} A'_2$.

Thus,

\ben
\frac{K^2}{\delta}N_2 &\geq& K\sqrt{N_1N_2}  \nonumber \\
						   &\geq& |A_1 \stackrel{G}{+} A_2| \nonumber \\
						   &\geq& |(x, A_1(x)) \stackrel{G}{+} A'_2| \geq \frac{\delta}{8} n_1 |\pi_I(A'_2)| \label{eq:A_2basesize}.
\een
Now define 
\beq
	\bar{A}_2 := \bigcup_{\stackrel {x \in \pi_I(A'_2)}{|A'_2(x)| > 10^{-4} \delta^{5}K^{-2}n_1} } (x, A'_2(x)).
\eeq

Clearly,
\ben
|A'_2 \setminus \bar{A}_2| &\leq& 10^{-4} \delta^{5}K^{-2}n_1 |\pi_I(A'_2)| \nonumber \\
										&\stackrel {(\ref{eq:A_2basesize})}{\leq}& 10^{-3} \delta^{3} N_2 \stackrel{(\ref{eq:A_2size})}{\leq} \frac{\delta}{10} |A'_2| \label{eq:differenceAprime_2Aprime},
\een
so
\ben
|A_1 \times \bar{A}_2 \cap G| \stackrel {(\ref{eq:A_1A_2density})}{\geq} \frac{\delta}{4}N_1 |A'_2|  - \frac{\delta}{10} N_1|A'_2| \geq \frac{\delta}{10} N_1 |A'_2| 
\een

Now, by the dyadic pigeonhole principle there exists $m_2$ with
\beq \label{eq:m2size}
10^{-4}\delta^5 K^{-2}n_1 < m_2 < n_1
\eeq
 such that 
 \beq \label{barAprime_def}
\bar{A}'_2 := \bigcup_{m_2 \leq |\bar{A}_2(x)| < 2m_2} (x, \bar{A}_2(x))
 \eeq
 has size at least $c \log^{-1}{(K/\delta)} |\bar{A}_2|$ for some $c > 0$ (e.g. $10^{-1}$ will do). We conclude
 \ben
|\bar{A}'_2| &\geq& c \frac { |\bar{A}_2|}{\log{(K/\delta)}} \stackrel{(\ref{eq:A_2size}),(\ref{eq:differenceAprime_2Aprime})}{\geq} c \frac{\delta^2}{\log{(K/\delta)}} N_2   \label{eq:Aprime_2size}%\\
%|A_1 \times \bar{A}'_2 \cap G| &\stackrel {(\ref{eq:A_2A_1degree})}{\geq}& c \frac {N_1 |\bar{A}_2|}{\log{(\delta/K)}} \label{eq:A_1Aprime_2density}
 \een
and 
 
% By (\ref{eq:A_2degree}) we conclude
% that
%\beq
%|A_1 \times \bar{A}'_2 \cap G| \geq c \frac {\delta}{\log{\delta/K} } N_1 |\bar{A}'_2|. 
%\eeq

%Now, as we have regularized the fibers, we renew $A_2 := \bar{A}'_2$ for the sake of notation and record the losses (taking into account (\ref{eq:A_2size}))
\ben
|A_1 \times \bar{A}'_2 \cap G| &\stackrel {(\ref{eq:A_2A_1degree})}{\geq}&  \frac{\delta}{4} N_1|\bar{A}'_2| \label{eq:Step1density}\\
%|\bar{A}'_2| &>& c \frac {\delta^3} {\log{(\delta/K)} } N_2 \label{eq:Step1A_2size} \\
|\bar{A}'_2| &\approx& M_2 m_2, \label{eq:Step1M_2m_2}
\een
with $M_2$ defined as
\ben
M_2 &:=& |\pi_I(\bar{A}'_2)|
\een
and $m_2$ defined by (\ref{eq:m2size}), (\ref{barAprime_def}), (\ref{eq:Step1M_2m_2}).

\vspace{1em}
\noindent \textbf{Step 2. }(Regularizing the fibers of $A_1$)

Define $N'_1 := |A_1|, N'_2 := |\bar{A}'_2|$ and record
\ben
  N'_1 &\geq& \frac{\delta}{2} N_1 \\
  N'_2 &>& c \frac {\delta^2} {\log{(K/\delta)} } N_2 \label{eq:Nprime_2size} 
\een

Let 
\ben \label{eq:A_0definition}
A_0 = \bigcup_{\stackrel{x \in \pi_I(A_1)}{|A_1(x)| \leq 10^{-5}\delta^2 K^{-1}m_2}} (x, A_1(x)). 
\een

We want to show that
\ben \label{eq:A0density}
|A_0 \times \bar{A}'_2 \cap G|  \leq \frac{\delta}{40}N_1|\bar{A}'_2| \stackrel{(\ref{eq:Step1density})}{\leq} \frac{1}{10} |A_1 \times \bar{A}'_2 \cap G|.
\een
The argument is similar to the one of Step 1 with $n_1$ replaced by $m_2$. Assume 
$$
|A_0 \times \bar{A}'_2 \cap G|  \geq \frac{\delta}{40}N_1|\bar{A}'_2|.
$$ Then there is $\bar{y} \in \pi_I(\bar{A}'_2)$ such that
\ben
|A_0 \times (\bar{y}, \bar{A}'_2(\bar{y})) \cap G| \geq \frac{\delta}{100}N_1m_2,
\een
since the vertex sets $\{ (y, \bar{A}'_2(y)) : y \in \pi_I(\bar{A}'_2) \}$ are disjoint and are of size $m_2$ each (within a factor of two). Next, let $A'_0 \subset A_0$ be such that
\beq
	|z \times (\bar{y}, \bar{A}'_2(\bar{y})) \cap G|  \geq \frac{\delta}{200} m_2.
\eeq
for each $z \in A'_0$. We clearly have 
\beq
|A'_0| \geq \frac{\delta}{200}N_1.
\eeq

%Namely, we take some $y \in \pi_I(\bar{A}'_2)$ and the fiber 
%$(x, A_2(x)) \subset \bar{A}'_2$. By (\ref{eq:A_2A_1degree}) one has
%\ben
%|A_1 \times (x, \bar{A}'_2(x)) \cap G| \geq \frac{\delta}{4}N_1 |\bar{A}'_2(x)| \geq \frac{\delta}{4}N_1m_2
%\een
%Let $A'_1 \subset A_1$ be such that
%\beq
%	|z \times (x, \bar{A}'_2(x)) \cap G|  \geq \frac{\delta}{8} m_2.
%\eeq
%We have 
%\beq
%|A'_1| \geq \frac{\delta}{16}N_1
%\eeq

Similarly to (\ref{eq:A_2basesize}), denoting  
$$
M := \max_{x \in \pi_I(A'_0)} |A'_0(x)|,
$$ 
we write
\ben
	KN_1 &\geq& K \sqrt{N_1N_2} \geq |A_1 \stackrel{G}{+} A_2| \geq |A'_0 \stackrel{G}{+} (\bar{y}, \bar{A}'_2(\bar{y}))| \nonumber \\
			  & \geq& \frac{\delta}{200} |\pi_I(A'_0)| m_2 \geq \frac{\delta}{200}\frac{|A'_0|}{M}m_2 \\
			  &\geq& \frac{\delta^2}{4\cdot 10^4} \frac {N_1m_2}{M}, \label{eq:Aprime_1basesize}
\een
so
$$
M > 10^{-5}K^{-1}\delta^2 m_2,
$$
which contradicts (\ref{eq:A_0definition}). 

If we now define
\beq
	\bar{A}_1 := \bigcup_{\stackrel {x \in \pi_I(A_1)}{|A_1(x)| > 10^{-5} \delta^{2} K^{-1}m_2} } (x, A_1(x)),
\eeq
then by the preceding discussion and (\ref{eq:Step1density}) we have 
\beq
	|\bar{A}_1 \times \bar{A}'_2 \cap G| \geq \frac{\delta}{8}N_1|\bar{A}'_2|.
\eeq
Since $10^4K^2\delta^{-5}m_2 > n_1 \geq |A_1(x)|$,
by the dyadic pigeonhole principle (since the fibers are disjoint sets of vertices) there exist (the bounds below are somewhat weakened for easier bookkeeping)
\ben
	10^{-5} \delta^{5} K^{-2}m_2 < m_1 < 10^5K^2\delta^{-5}m_2
\een
 such that with 
\beq
	\bar{A}'_1 := \bigcup_{\stackrel {x \in \pi_I(A_1)}{m_1 \leq  |A_1(x)| < 2m_1} } (x, A_1(x)),
\eeq
and some $c > 0$ (say $10^{-2}$) 
\ben
|\bar{A}'_1 \times \bar{A}'_2 \cap G| \geq c\frac{\delta}{\log (K/\delta)}N_1|\bar{A}'_2|.
\een
In particular,
\ben
|\bar{A}'_1| \geq  c\frac{\delta}{\log (K/\delta)}N_1.
\een
Finally, we define
\ben
 M_1 := |\pi_I({\bar{A}'_1})|
\een
so that $|\bar{A}'_1| \approx M_1m_1$ and 
\ben
|\bar{A}'_1(x)| \approx m_1
\een
for each $x \in \pi_I({\bar{A}'_1})$.

%\ben
%|A_1 \tems \bar{A}'_2| &\qeq&  \nonumber \\
%										&\stackrel {(\ref{eq:Aprime_1basesize})}{\leq}& 10^{-3} \delta^{2} N_1 \leq \frac{\delta}{10} |A'_1|,
%\een
\vspace{1em}
\noindent \textbf{Step 3. }(Regularizing the graph fibers)
We renew the definition of $A_1 := \bar{A}'_1$ and $A_2 := \bar{A}'_2$ so that 
\ben 
|A_1 \times A_2 \cap G| &\geq& c\frac{\delta}{\log (K/\delta)}|A_1||A_2|  \label{eq:Step3graphdensity} \\
|A_1| \approx M_1m_1 &\geq& c\frac {\delta}{\log (K/\delta)}N_1  \label{eq:Step3A_1size}\\
|A_2| \approx M_2m_2  &\geq& c \frac {\delta^2}{\log{(K/\delta)}} N_2  \label{eq:Step3A_2size}
\een
and the fibers of $A_1$ and $A_2$ are approximately of size $m_1$ and $m_2$ respectively. 

Recall that for $(x, y) \in \pi_I(A_1) \times \pi_I(A_2)$ we define the fiber graph $G_{(x, y)}$ as
$$
	G_{(x, y)} := \{ (x', y') \in A_1(x) \times A_2(y) : (x \oplus x', y \oplus y') \in G  \}.
$$
In particular, since we have regularized the fibers of $A_1$ and $A_2$, we have
$$
|G_{(x, y)}| \leq 4m_1m_2.
$$

Let
\ben
 G'_I := \{(x, y) \in \pi_I(A_1) \times \pi_I(A_2): |G_{(x, y)}| \geq \frac {c\delta}{16\log (K/\delta)}m_1m_2 \}. 
\een
It follows from (\ref{eq:Step3graphdensity}) that 
\ben
	\sum_{(x, y) \in G'_I} |G_{(x, y)}| \geq  c\frac{\delta}{\log (K/\delta)}|A_1||A_2|.
\een

By dyadic pigeonholing we can find $\delta_2 \gg \frac{\delta}{\log (K/\delta)}$ such that with 
\ben
 \bar{G}'_I := \{(x, y) \in G'_I: \delta_2m_1m_2 \leq |G_{(x, y)}| < 2\delta_2m_1m_2 \}. 
\een
one has
\ben
	\sum_{(x, y) \in \bar{G}'_I} |G_{(x, y)}| \geq  c\frac{\delta}{\log^2 (K/\delta)}|A_1||A_2|.
\een
It then follows that
\ben
| \bar{G}'_I| \geq c \frac{\delta}{\delta_2 \log^2 (K/\delta)} M_1M_2. \label{eq:Step3Gprimesize}
\een

\vspace{1em}
\noindent \textbf{Step 4. }(Regularizing the doubling constant)

Let $(x, y) \in \pi_I(A_1) \times \pi_I(A_2)$. We define 
$$
	K_+(G_{(x, y)}) := \frac{|A_1(x) \stackrel{G_{(x, y)}}{+} A_2(y)|}{\sqrt{|A_1(x)||A_2(y)|}}
$$
to be the normalized doubling constant of the fibers along the fiber graph $G_{(x, y)}$.

Define
$$
H :=  \{(x, y) \in \bar{G}'_I: K_{+}(G_{(x, y)}) > C\log^{3}(K/\delta) \delta^{-10} K \}.
$$

We want to show that $|H| < \frac{1}{10}|\bar{G}'_I|$ provided $C$ is large enough. Indeed, it is trivial that
\beq
|\pi_I(A_1) \stackrel{H}{+} \pi_I(A_2)| \geq \frac{|H|}{\min(|\pi_I(A_2)|, |\pi_I(A_1)|)} \geq \frac {|H|}{\sqrt{M_1M_2}}
\eeq
so
\ben
K \sqrt {N_1N_2} &\geq& |A_1 \stackrel{G}{+} A_2| \\
							&\geq& \min_{(x, y) \in H} |A_1(x) \stackrel{G_{(x, y)}}{+} A_2(y)| |\pi_I(A_1) \stackrel{H}{+} \pi_I(A_2)| \\
							&>&  C\log^{3}(K/\delta) \delta^{-10} K \sqrt{|A_1(x)||A_2(y)|} \frac {|H|}{\sqrt{M_1M_2}} \\
							%&\geq& \frac {\log^{10}(K/\delta) \delta^{-10} K}{\sqrt{M_1M_2}} \\
							&\geq& C\log^{3}(K/\delta) \delta^{-10} K \sqrt{m_1M_1m_2M_2}\frac {|H|}{M_1M_2} \\
							&\stackrel{(\ref{eq:Step3A_1size}),(\ref{eq:Step3A_2size}),(\ref{eq:Step3Gprimesize})}{\geq}& c C K \sqrt {N_1N_2} \frac {|H|}{|\bar{G}'_I|}. 
\een 
Thus, by taking $C$ large enough we can ensure that 
\beq
|H| \leq \frac{1}{10} |\bar{G}'_I|.
\eeq
Denote 
\beq
\bar{G}''_I := \bar{G}'_I \setminus H.
\eeq
By dyadic pigeonholing there is $K_2 \leq C\log^{3}(K/\delta) \delta^{-10} K$ such that 
\beq \label{eq:Step4Gdoubleprimesize}
|\{ (x, y) \in \bar{G}''_I: K_2 \leq K_{+}(G_{(x, y)}) < 2K_2 \}| \geq c \frac{|\bar{G}''_I|}{\log (K/\delta)}.
\eeq
Let $G_{1, 0} \subset \bar{G}''_I \subset \pi_I(A_1) \times \pi_I(A_2)$ be the graph defined by such pairs. We then have
\ben
K \sqrt {N_1N_2} &\geq& |\pi_I(A_1) \stackrel {G_{1, 0}}{+} \pi_I(A_2)|K_2 \sqrt{m_1m_2} \\
						   &\geq& K_+(G_{1, 0}) K_2 \sqrt{m_1M_1m_2M_2} \\
						   &\geq& CK_+(G_{1, 0}) K_2 \delta^{3/2} \frac {\sqrt{N_1N_2}}{\log(K/\delta)},
\een
so
\beq \label{eq:Gdoubleprimedoubling}
K_+(G_{1, 0}) K_2 \leq c\delta^{-3/2} \log(K/\delta) K < C\delta^{-2} \log(K) K.
\eeq

It remains to sum up what we have achieved. By (\ref{eq:Step3Gprimesize}), (\ref{eq:Step4Gdoubleprimesize}) we have
\ben
|\pi_I(A_1)| &=& M_1 \\
|\pi_I(A_2)| &=& M_2 \\
G_{1, 0} &\subset& \pi_I(A_1) \times \pi_I(A_2) \\
|G_{1, 0}| &\geq& c \frac{\delta}{\delta_2 \log^3 (K/\delta)} M_1M_2 
\een
For any $(x, y) \in G_{1, 0}$ we have 
\ben
 A_1(x) &\approx& m_1  \\
 A_2(y) &\approx& m_2   \\
 |G_{(x, y)}| &\approx& \delta_2 m_1m_2 \\
 |A_1(x) \stackrel{G_{(x, y)}}{+} A_2(y)| &\approx& K_2\sqrt{m_1m_2}.
\een
with
\ben
	m_1M_1 &\geq& c\frac{\delta}{\log (K/\delta)}N_1 \\
	m_2M_2 &\geq& c \frac{\delta^2}{\log{(K/\delta)}} N_2  %\\
%	c \delta^{5} K^{-2}m_2 &<& m_1 < cK^2\delta^{-5}m_2	
\een
Defining $\delta_1 := c \frac{\delta}{\delta_2 \log^3 (K/\delta)}$ and $K_1 := K_+({G_{1, 0}})$ we also have
\ben
\delta_1 \delta_2 &>& c\log^{-3} (K/\delta) \delta \\
K_1K_2 &\leq& CK \log(K) \delta^{-2}.
\een
Finally, define 
$$
G' := \{ (x \oplus x', y \oplus y') : (x, y) \in G_{1, 0}; (x', y') \in G_{(x, y)} \}
$$
so that $G'_I = G_{1,0}$ and the proof is finished.

%Finally, 
%\ben
%G' := \bigcup_{(x, y) \in G_{1, 0}} G_{(x, y)} \subset G
%\een
%has size at least 
%\ben
%|G_{1, 0}| \delta_2 m_1m_2 &\geq& c \delta \log^{-3} \log{K/delta} M_1M_2m_1m_2 \\
%											&\geq& c \delta \log^{-5} \log{K\delta} N_1N_2.
%\een

\end{proof}

\section{Iteration scheme}

In this section we will use Lemma \ref{lm:FiberingLemma} in order to setup an iteration scheme. At each step we have a pair of sets $(\mathcal{A}_1, \mathcal{A}_2)$ which correspond to a pair of additive sets $(A_1, A_2) := (\mathcal{P}(\mathcal{A}_1), \mathcal{P}(\mathcal{A}_2))$ and a graph $G$ on $A_1 \times A_2$, together with the data $(N, \delta, K)$ such that:
\ben 
 	|A_1||A_2| &=& N \label{eq:setupNGKdelta1} \\
	|A_1 \stackrel{G}{+} A_2| &\leq& KN^{1/2} \label{eq:setupNGKdelta2} \\
	|G| &\geq& \delta N \label{eq:setupNGKdelta3}
\een

Apart from that, the setup above is equipped with a pair of functions $\psi(N, \delta, K)$, $\phi(N, \delta, K)$ (which are called \emph{admissible} in the original paper). The rather technical definition is below.

\begin{defi}[Admissible pair of functions]
A pair of functions $\psi(N, \delta, K), \phi(N, \delta, K)$ is \emph{admisible} if for arbitrary sets $A_1, A_2 \subset \mathbb{Z}^{[n]}$ and a graph $G$ on $A_1 \times A_2$ satisfying (\ref{eq:setupNGKdelta1})-(\ref{eq:setupNGKdelta3})  the following holds.

There is a graph $G' \subset G$ such that

\begin{enumerate}[(i)]

\item{Graph size is controlled by $\phi$:}
$$
	|G'| \geq \phi(N, \delta, K)
$$

\item{Separation of $G'$-neigborhoods is controlled by $\psi$:\\} 
For any $a_1 \in A_1$ (resp. $a_2 \in A_2$) the $\mathcal{P}$-preimage of the $G'$-neighborhood 
$$
\mathcal{P}^{-1}\left[G'(a_1)\right] := \mathcal{P}^{-1}\left[\{a_2 \in A_2: (a_1, a_2) \in G' \} \right].
$$ (resp. of $G'(a_2)$) is $\psi(N, \delta, K)$-separating.

\end{enumerate}
\end{defi}

Note that by Claim \ref{claim:trivialseparation}, the pair $\psi(N, \delta, K) := N; \phi(N, \delta, K) := \delta N$ is trivially admissible with much room to spare.

The following lemma gives a Freiman-type pair of admissible functions which is better than non-trivial in the regime $K \lll \log N$ and will be used later to bootstrap the argument. 

\begin{lemma}[Freiman-type admissible functions] \label{lm:FreimanAdmissiblePair}
There is an absolute constant $C > 0$ such that the pair of functions
\ben 
		\psi(N, \delta, K) &:=& \min \{ e^{(\frac{K}{\delta})^C}, N \} \label{eq:Freimanadmissiblepsi} \\
		\phi(N, \delta, K) &:=& \left( \frac {\delta}{K} \right)^C N  \label{eq:Freimanadmissiblephi}
\een
is admissible.
\end{lemma}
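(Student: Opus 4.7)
The plan is to combine the Balog--Szemer\'edi--Gowers (BSG) theorem with Freiman's Lemma (Theorem~\ref{thm:FreimanLemma}) and the rank-driven separation argument underlying Corollary~\ref{corollary:Chang}. The crucial observation is that the proof of Corollary~\ref{corollary:Chang} uses the multiplicative doubling of $A$ only to extract, via Freiman's Lemma, a bounded coordinate set $I$ of primes; once $\mathrm{rank}(\mathcal{P}(S))$ is known to be $\leq d$, iterating the one-prime separation bound \eqref{eq:oneprimeseparation} at most $d$ times already shows that $S$ is $6^d$-separating. Since rank is monotone under passing to subsets, once we have rank control on large ``parent'' sets $A_1', A_2'$, we automatically have it on every neighborhood $G'(a)$ that sits inside them---exactly the shape of quantifier required by admissibility.

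First I would apply a graph version of BSG to $G \subset A_1 \times A_2$. From $|G| \geq \delta N$ and $|A_1 \stackrel{G}{+} A_2| \leq K\sqrt{N}$, Cauchy--Schwarz gives a normalized additive energy of at least $\delta^2/K$ between $A_1$ and $A_2$. BSG then produces $A_1' \subset A_1$, $A_2' \subset A_2$ of size $|A_i'| \geq (\delta/K)^{O(1)} |A_i|$ and a subgraph $G' \subset G \cap (A_1' \times A_2')$ with $|G'| \geq (\delta/K)^{O(1)} N$, satisfying
\[
  |A_1' + A_2'| \leq (K/\delta)^{O(1)} \sqrt{|A_1'||A_2'|}.
\]
Next, by the Pl\"unnecke--Ruzsa (Ruzsa triangle) inequality, this upgrades to a doubling bound $|A_i' + A_i'| \leq (K/\delta)^{O(1)} |A_i'|$ for $i = 1, 2$. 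Finally, Freiman's Lemma (Theorem~\ref{thm:FreimanLemma}), applied to $A_i' \subset \mathbb{Z}^{[n]} \hookrightarrow \mathbb{Q}^{[n]}$, yields $\mathrm{rank}(A_i') \leq (K/\delta)^{O(1)}$.

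With this $G'$ fixed, for $a_1 \in A_1 \setminus A_1'$ the neighborhood $G'(a_1)$ is empty and the condition is vacuous. For $a_1 \in A_1'$, the inclusion $G'(a_1) \subset A_2'$ inherits the rank bound, so $\mathcal{P}^{-1}(G'(a_1))$ has multiplicative dimension at most $(K/\delta)^{O(1)}$. Running the proof of Corollary~\ref{corollary:Chang} verbatim---but feeding in this direct rank control instead of deriving it from a multiplicative doubling hypothesis---yields that $\mathcal{P}^{-1}(G'(a_1))$ is $6^{(K/\delta)^{O(1)}}$-separating, which is at most $e^{(K/\delta)^C}$ for a suitable absolute $C$. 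The symmetric argument with the roles of $A_1$ and $A_2$ exchanged handles the condition on $a_2 \in A_2$, and $|G'| \geq (\delta/K)^{O(1)} N$ absorbs into $\phi(N, \delta, K)$ after choosing $C$ large enough.

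The main technical obstacle will be pinning down the quantitative form of BSG that delivers a subgraph $G' \subset G$ of size polynomial in $\delta/K$ while simultaneously controlling the full sumset $|A_1' + A_2'|$; this is standard in the literature but the exponent bookkeeping is delicate. A secondary issue is that Freiman's Lemma produces a clean rank bound only once $|A_i'|$ dominates the square of the rank; in the complementary regime $|A_i'| \leq e^{(K/\delta)^C}$ one simply uses the trivial separation bound (Claim~\ref{claim:trivialseparation}), which is absorbed into the $\min$ with $N$ in~\eqref{eq:Freimanadmissiblepsi}.
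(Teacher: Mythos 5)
Your proposal is correct and follows essentially the same route as the paper: a Balog--Szemer\'edi--Gowers variant that retains a dense subgraph of $G$, followed by Freiman's Lemma to bound the rank of $\mathcal{P}$ of the refined set, and the separation argument of Corollary~\ref{corollary:Chang} (which, as you note, only needs the rank bound) applied to the neighborhoods, with the $\min\{\cdot, N\}$ covering the trivial regime. The only cosmetic difference is that the paper merges the two sides into $A = A_1 \cup A_2$ and applies a one-set BSG variant (Tao--Vu, Exercise 6.4.10), whereas you keep the bipartite structure and add a Pl\"unnecke--Ruzsa step to pass from $|A_1'+A_2'|$ to individual doubling.
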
 
\begin{proof}
By the setup, we are given two sets $\mathcal{A}_1, \mathcal{A}_2$ of sizes $N_1, N_2$ and a graph $G$ of size $\delta N_1N_2$ such that
\beq
 |A_1 \stackrel{G}{+} A_2| \leq K \sqrt{N_1N_2} \label{eq:A_1A_2Gdoubling}
\eeq

Assume wlog that $N_1 \geq N_2$  and take $A := A_1 \cup A_2$, which is of size  $ \approx N_1$. Since by (\ref{eq:A_1A_2Gdoubling})
$$
\frac{K^2}{\delta^2} N_2 \geq N_1
$$
we have 
$$
|G| \gg \frac{\delta^3}{K^2}|A|^2
$$
and
$$
|A \stackrel{G}{+} A| \ll K |A|. 
$$
By a variant of the Balog-Szemer\'edi-Gowers theorem (see e.g. \cite{TaoVu}, Excercise 6.4.10) there is $A' \subset A$ such that $|A' + A'| < K'|A'|$ and $|G \cap (A' \times A')| > \delta' N^2_1$ with
\ben
	\delta' > \left( \frac{\delta}{K}\right)^C \label{eq:deltaboundFreiman}\\
	K' < \left( \frac{K}{\delta} \right)^C \label{eq:KboundFreiman}
\een

By Theorem \ref{thm:FreimanLemma} any subset of $A'$ has rank at most $K'$ and by Corollary \ref{corollary:Chang}, the $\mathcal{P}$-preimage of any subset of $A'$ is at most $e^{CK'}$-separating for some $C > 0$.  Thus, taking $G' := G \cap (A' \times A')$ by (\ref{eq:deltaboundFreiman}) and (\ref{eq:KboundFreiman}) we verify that the pair (\ref{eq:Freimanadmissiblepsi}), (\ref{eq:Freimanadmissiblephi}) is admissible.

\end{proof}

The goal is to find a better pair of admissible functions. The lemma below implements the `induction on scales' approach, which allows one to cook up a new pair $\phi_*(N, \cdot, \cdot), \psi_*(N, \cdot, \cdot)$ from a given pair of admissible functions, but taken at the smaller scale  $\approx N^{1/2}$.
 
\begin{lemma} \label{lm:InductionStepLemma}

Let $\psi$ and $\phi$ be an admissible pair of functions. Then for some absolute constant $C > 0$ the pair of functions
\ben \label{eq:admissiblepairinduction}
		\psi_{*}(N, \delta, K) &:=& \max C \psi (N', \delta', K') \psi  (N'', \delta'', K'') \\
		\phi_{*}(N, \delta, K) &:=& \min \phi(N', \delta', K') \phi(N'', \delta'', K''),
\een
is admissible. Here $\min$ and $\max$ is taken over the data $(N', \delta', K'), (N'', K'', \delta'')$ such that
\ben  \label{eq:parameterconstraints}
   c \delta^7 \log^{-22} (K/\delta) N &\leq& N'N'' \leq N \\ \nonumber
   N' + N'' &\leq& C \delta^{-45} K^{11}N^{1/2}	\\ \nonumber
   K'K'' &\leq& C \frac{K \log^8 K }{\delta^{11}} \\ \nonumber
   \delta' \delta'' &\geq& c \log^{-6} (K/\delta) \delta. \nonumber
\een

\end{lemma}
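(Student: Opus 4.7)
The plan is to apply the Fibering Lemma (Lemma \ref{lm:FiberingLemma}) to the given setup $(A_1, A_2, G)$ with a carefully chosen coordinate partition $I \cup J = [n]$, then to invoke the admissibility of $(\psi, \phi)$ twice — once on the base graph $G'_I \subset \pi_I(A'_1) \times \pi_I(A'_2)$ and once on each fiber graph $G'_{(x,y)} \subset A'_1(x) \times A'_2(y)$ — and finally to glue the two refined subgraphs into a single graph $G^*$ on $A_1 \times A_2$. The direct-sum structure furnished by the Fibering Lemma becomes a coprime product decomposition under the prime valuation map $\mathcal{P}^{-1}$, because the coordinates in $I$ correspond to a set of primes disjoint from those in $J$; this is precisely the setup of Claim \ref{claimPsi}, which allows the separating constants of the base and the fiber to be multiplied.

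Concretely, after fixing the partition and running Lemma \ref{lm:FiberingLemma} to obtain $A'_1 \subset A_1$, $A'_2 \subset A_2$, a subgraph $G' \subset G$, and parameters $(M_1, M_2, \delta_1, K_1)$ for the base and $(m_1, m_2, \delta_2, K_2)$ for the fibers, I set $(N', \delta', K') := (M_1 M_2, \delta_1, K_1)$ and $(N'', \delta'', K'') := (m_1 m_2, \delta_2, K_2)$. Admissibility of $(\psi, \phi)$ applied to the base produces $G''_I \subseteq G'_I$ with $|G''_I| \geq \phi(N', \delta', K')$ and with the $\mathcal{P}$-preimage of each $G''_I$-neighborhood being $\psi(N', \delta', K')$-separating; applied to each fiber graph it produces $G''_{(x,y)} \subseteq G'_{(x,y)}$ with the analogous property at scale $(N'', \delta'', K'')$. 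I then form
$$
G^* := \{ (x \oplus x', y \oplus y') : (x, y) \in G''_I,\ (x', y') \in G''_{(x,y)} \} \subseteq G.
$$
The bound $|G^*| \geq \phi(N', \delta', K')\,\phi(N'', \delta'', K'') \geq \phi_*(N, \delta, K)$ is immediate from the definition of $\phi_*$ as a minimum. For separation, given $a_1 = x \oplus x' \in A'_1$ the preimage $\mathcal{P}^{-1}[G^*(a_1)]$ decomposes as $\bigcup_{y \in G''_I(x)} \mathcal{P}^{-1}(y) \cdot \mathcal{P}^{-1}[G''_{(x,y)}(x')]$, which satisfies the coprimality hypothesis of Claim \ref{claimPsi}; invoking that claim with $\psi_1 := \psi(N', \delta', K')$ and $\psi_2 := \psi(N'', \delta'', K'')$ yields that $\mathcal{P}^{-1}[G^*(a_1)]$ is $\psi_1 \psi_2$-separating, hence $\psi_*(N, \delta, K)$-separating (absorbing the constant $C$). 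The Fibering Lemma's quantitative conclusions (\ref{eq:FiberingLemmaSetSizes}), (\ref{eq:FiberingLemmaDoublingConstants}), (\ref{eq:FiberingLemmaDeltaSizes}) translate immediately into the required $N' N'' \geq c \delta^7 \log^{-22}(K/\delta)\, N$, $K' K'' \leq C K \log^8(K) / \delta^{11}$, and $\delta' \delta'' \geq c \log^{-6}(K/\delta)\,\delta$, with room to spare.

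The hard part will be securing the one remaining constraint $N' + N'' \leq C \delta^{-45} K^{11} N^{1/2}$, which forces the partition $I \cup J = [n]$ to be chosen so that both $M_1 M_2$ and $m_1 m_2$ are of size roughly $N^{1/2}$, up to polynomial factors in $K$ and $\delta^{-1}$. One always has $M_1 M_2 \cdot m_1 m_2 \leq N$ (since $|A'_i| \approx M_i m_i \leq |A_i|$) and individually each factor is at most $K\sqrt N$ (from $|A_i| \leq |A_1 \stackrel{G}{+} A_2| \leq K\sqrt N$), so a balanced partition does exist in principle. My plan is to build $I$ greedily by adding coordinates one at a time and stop the first time $|\pi_I(A_1)|\cdot|\pi_I(A_2)|$ crosses $\sqrt N$; should a single coordinate cause the product to overshoot by more than a polynomial factor in $K/\delta$, I will first restrict $A_1$ or $A_2$ to a dyadic slice in that coordinate (at a logarithmic cost) and then continue. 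The large exponents $K^{11}$ and $\delta^{-45}$ in the constraint are precisely what absorb such a single-coordinate jump together with the four successive regularization losses inside Lemma \ref{lm:FiberingLemma}.
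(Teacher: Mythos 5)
Your first half is exactly the paper's first half: cut the coordinates at some $t'$, run Lemma \ref{lm:FiberingLemma}, apply admissibility once to the base graph $G'_I$ and once to each fiber graph $G'_{(x,y)}$, glue, and multiply separating constants via Claim \ref{claimPsi}. The genuine gap is in how you propose to secure $N'+N''\leq C\delta^{-45}K^{11}N^{1/2}$. A coordinate partition balancing $M_1M_2$ and $m_1m_2$ near $N^{1/2}$ need not exist: in the paper's own model example, where $A_1=A_2$ is a three-dimensional progression $\{[0,N_1^\alpha]e_1+[0,N_1^{1-2\alpha}]e_2+[0,N_1^\alpha]e_3\}$, any cut gives either $M_1M_2\approx N^{\alpha}$ or $M_1M_2\approx N^{1-\alpha}$, because a single coordinate carries almost all of the entropy. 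Your fallback of ``restricting to a dyadic slice in that coordinate at a logarithmic cost'' does not work: to rebalance you would have to keep only about an $N^{-1/4}$ fraction of the values of that coordinate, a polynomial loss in $N$ (destroying both the density $\delta$ and any hope of the $\phi_*$ lower bound), and slicing a coordinate into intervals has no multiplicative meaning under $\mathcal{P}^{-1}$, so Claim \ref{claimPsi} no longer applies to the pieces. Relatedly, your remark that the other three constraints follow ``with room to spare'' is a symptom of the missing step: Lemma \ref{lm:FiberingLemma} only gives $K_1K_2\leq CK\log K\,\delta^{-2}$ and $\delta_1\delta_2\geq c\log^{-3}(K/\delta)\delta$, and the weaker exponents $\log^8K/\delta^{11}$ and $\log^{-6}(K/\delta)$ in the statement are there precisely because a second fibering is still to come.

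The paper's resolution is different: choose $t'$ not by balancing projections but by the threshold $f(t')\geq N^{1/4}>f(t'+1)$, where $f(t)$ is the maximal fiber size over the first $t$ coordinates; accept that $m_1m_2$ may exceed $N^{1/2}$; and then apply Lemma \ref{lm:FiberingLemma} a \emph{second} time inside each fiber pair $(A'_1(x),A'_2(y))$, splitting off only the single coordinate $I'=\{t'+1\}$. The choice of $t'$ guarantees the new fibers satisfy $l_1l_2\leq N''\leq N^{1/2}$, while the lower bound $f(t')\geq N^{1/4}$ feeds the third estimate of (\ref{eq:FiberingLemmaSetSizes}) to keep $m_1m_2\gtrsim\delta^{20}K^{-10}N^{1/2}$, which is what produces the $\delta^{-45}K^{11}N^{1/2}$ bound on $N'+N''$. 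Crucially, the base graph $\mathcal{K}_{I'}$ of this second fibering lives on one coordinate, so its $\mathcal{P}$-preimages are $6$-separating by (\ref{eq:oneprimeseparation}) regardless of size; its separation cost is only the absolute constant $C$ appearing in $\psi_*$, while its cardinality $|\mathcal{K}_{I'}|$ is credited to $\phi_*$ through the definition $N':=M_1M_2\,l_1l_2\,|\mathcal{K}_{I'}|/N''$, together with sublinearity of $\phi$ and monotonicity of $\psi$. Your proposal has no counterpart to this three-factor structure (base, single middle coordinate, small fibers), and without it the size constraint cannot be met in general.
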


\begin{proof}

 	Let $A_1, A_2 \subset \mathbb{Z}^{[n]}$ of sizes $N_1, N_2$ respectively, $G \subset A_1 \times A_2$ and suppose that the conditions (\ref{eq:setupNGKdelta1})-(\ref{eq:setupNGKdelta3}) are satisfied with parameters $(N := N_1N_2, \delta, K)$. Our ultimate goal is to find a subgraph $G' \subset G$ of size at least $\phi_{*}(N, \delta, K)$ such that the $\mathcal{P}$-preimage of any its neighbourhood is $\psi_{*}(N, \delta, K)$-separating.  In order to achieve this, we will apply Lemma \ref{lm:FiberingLemma} and then use the hypothesis that the pair $\psi, \phi$ is admissible for much smaller sets.	
 	
 	Define a function $f(t)$ for $0 \leq t \leq n$ as
 	$$
		f(t) := \max \{ |A_1(x)| + |A_2(y)| \}
 	$$
 	where the maximum is taken over all $x \in \pi_{[t]}(A_1),  y \in  \pi_{[t]}(A_2)$. Clearly $f$ is decreasing, $f(0) = |A_1| + |A_2| \geq N^{1/2}$ and $f(n) = 0$ so there is $t'$ such that
 	\ben \label{eq:FirstcoordinateSplit}
		f(t') \geq N^{1/4} 	
 	\een
	but
	\ben \label{eq:FirstcoordinateSplit2}
		f(t'+1) < N^{1/4} 	
   \een

	We use the $t'$ defined above for the decomposition $[n] = I \cup J$ with $I := [t']$ and $J := \{t'+1, \ldots, n \}$. We now apply Lemma \ref{lm:FiberingLemma} and get a pair of sets $(A'_1, A'_2)$ together with a graph $G' \subset A'_1 \times A'_2$ such that 
 	\ben
 		A'_1 &=& \bigcup_{x \in \pi_I(A'_1) } (x', A'_1(x')) \\
 		A'_2 &=& \bigcup_{y \in \pi_I(A'_2) } (y', A'_2(y'))
 	\een
 and the fibers $A'_1(x'), A'_2(y')$ together with the fiber graphs $G'_{x, y}$ are uniform as defined in the claim of Lemma \ref{lm:FiberingLemma}. Note that it is possible that $t' = 0$, in which case the sets split trivially with $\pi_I(A'_1) = \pi_I(A'_2) = \{ 0 \}$. 
 
 Using the notation of Lemma \ref{lm:FiberingLemma}  we have 
 \ben
 	|\pi_I(A'_1) \stackrel{G'_I}{+} \pi_I(A'_2)| \leq K_1(M_1M_2)^{1/2}.
 \een
Since $\phi, \psi$ is an admissible pair, there is $G''_I \subset G'_I$ of size at least $\phi(M_1M_2, \delta_1, K_1)$ such that all $\mathcal{P}$-preimages of its vertex neighbourhoods are $\psi(M_1M_2, \delta_1, K_1)$-separating.
Next, for each edge $(x, y) \in G''_I$, since it's a subgraph of $G'_I$, there is a graph $G'_{x, y} \subset \pi_J(A'_1) \times \pi_J(A'_2)$ such that 
\ben
	|A'_1(x) \stackrel{G'_{(x, y)}}{+} A'_2(y)| \leq K_2(m_1m_2)^{1/2}
\een
Again, by admissibility of $\phi, \psi$, there is $G''_{(x, y)} \subset G'_{(x, y)}$ of size at least $\phi(m_1m_2, \delta_2, K_2)$ such that all $\mathcal{P}$-preimages of its vertex neighbourhoods are $\psi(m_1m_2, \delta_2, K_2)$-separating.
 	
Now define $G'' \subset G \cap (A'_1 \times A'_2)$ as 
$$
	G'' := \{(x \oplus x', y \oplus y'): (x, y) \in G''_I, (x', y') \in G''_{(x, y)}  \}.
$$
It is clear by construction that indeed all vertices of $G''$ belong to $A'_1$ and $A'_2$ respectively. Moreover, we have
\ben \label{eq:firstreduction}
	|G''| \geq \phi(M_1M_2, \delta_1, K_1)\phi(m_1m_2, \delta_2, K_2).
\een
 	
Now let's estimate the separating constant for the $\mathcal{P}$-preimage of a neighbourghood $\mathcal{P}^{-1}[G''(u)]$ of some $u \in V(G'')$. Without loss of generality assume that $u \in A'_2$ and $u = v \oplus v'$. We can write
\ben
		G''(u) = \bigcup_{x \in G''_I(v)} \bigcup_{x' \in G''_{(x, v)}(v')}x \oplus x'
\een
Thus,
\ben 
\mathcal{P}^{-1}[G''(u)] = \bigcup_{x \in \mathcal{P}^{-1}[G''_I(v)]} x \left\{ \bigcup_{x' \in \mathcal{P}^{-1}[G''_{(x, v)}(v')]} x' \right\}.
\een
Since $G''_I(v)$ and $G''_{(x, v)}(v')$ are orthogonal as linear sets we conclude that $(x, x') = 1$ for $x \in \mathcal{P}^{-1}[G''_I(v)]$ and $x' \in \mathcal{P}^{-1}[G''_{(x, v)}(v')]$.  Thus, by Claim \ref{claimPsi} and the admissibility of $\phi, \psi$ applied to $G''_I$ and $G''_{(x, v)}$ we conclude that $\mathcal{P}^{-1}[G''(u)] $ is at most $\psi(M_1M_2, \delta_1, K_1)\psi(m_1m_2, \delta_2, K_2)$-separating.

We now record the bounds for $\delta_2, K_2, m_1, m_2$ given by Lemma \ref{lm:FiberingLemma}. We have 
\ben
	\delta_1\delta_2 &>& c \log^{-3}(K/\delta) \delta. \label{eq:delta_2}\\
	K_1 K_2 &<&  CK \log K \delta^{-2}   \label{eq:K_2} \\
	M_1m_1 &>& c \delta^2 \log^{-1} (K/\delta) N_1 \label{eq:M_1m_1} \\
	M_2m_2 &>& c \delta^2 \log^{-1} (K/\delta) N_2 \label{eq:M_2m_2} \\
	m_1, m_2 &>& c \delta^{10} K^{-4} N^{1/4}
\een

In particular, we have
\ben \label{eq:M_1M_2upperbound}
 M_1M_2 < \frac {N_1N_2}{m_1m_2} < c \delta^{-20}K^{8} N^{1/2}.
\een

We have thus verified the claim of the Lemma save for the fact that $m_1m_2$ may well be larger than $N^{1/2}$. In order to further reduce the size we apply Lemma \ref{lm:FiberingLemma} again for each pair of sets $(A'_1(x), A'_2(y))$ such that $(x, y) \in G'_I$, stripping off only a single coordinate as explained below. Assume the base point $(x, y)$ is fixed henceforth. 

\begin{rem}
The reader may keep in mind the following model case: $N_1 = N_2 = N^{1/2}$ and $A_1, A_2$ are three-dimensional arithmetic progressions
$$
\{ [0,N_1^\alpha]e_1 + [0, N_1^{1-2\alpha}]e_2 +  [0, N_1^\alpha]e_3  \},
$$
for some small $\alpha > 0$. In this case $t'=1, M_1 M_2 \approx N^\alpha$  and $m_1m_2 \approx N^{1-\alpha}$. However, the fibers $A_1(x)$ and $A_2(y)$ are heavily concentrated on the second coordinate which makes the separating constant of the $\mathcal{P}$-preimage small.
\end{rem}
\vskip 1em

We split the coordinates $J = \{t'+1, \ldots, n \}$ into 
$$
I' = \{ t' + 1 \}
$$ and 
$$
J' = \{t'+2, \ldots, n \}
$$ and apply  Lemma \ref{lm:FiberingLemma} for such a decomposition,  the pair $(A'_1(x), A'_2(y))$ and the graph $G_{(x, y)}$. 
We then get 
$$
A''_1 \subset  A'_1(x), \,\,\, A''_2 \subset A'_2(y)
$$ such that
\ben
	A''_1 &=& \bigcup_{w \in \pi_{I'}(A''_1)} (w, A''_1(w)) \\
	A''_2 &=& \bigcup_{z \in \pi_{I'}(A''_2)} (z, A''_2(z))
\een
and the fibers $A''_1(w)$ and $A''_2(z)$ are of approximately the same size $l_1$ and $l_2$ respectively. Note again, that, say,  the fiber $A''_1(w)$ may be trivial (e.g. $\{ 0\}$), which simply means that $l_1 \approx 1$.  

Next, we have a graph $\mathcal{K} \subset A''_1 \times A''_2$  with uniform fibers as defined in Lemma \ref{lm:FiberingLemma}.  Note that $l_1, l_2$ and $\mathcal{K}$ depend on the base point $(x, y)$ which we assume is fixed. 

The graph $\mathcal{K}$ splits into the one-dimensional base graph $\mathcal{K}_{I'} \subset \pi_{I'}(A''_1) \times \pi_{I'}(A''_2)$ and fiber graphs $\mathcal{K}_{w, z}$ such that for $(w, z) \in \mathcal{K}_{I'}$ 
\ben
|A''_1(w) +_{\mathcal{K}_{w, z}} A''_2(z)| \leq K_3(l_1l_2)^{1/2},
\een
with
\ben
|A''_1(w)| &\approx& l_1 \\
|A''_2(z)| &\approx& l_2 \\
|\mathcal{K}_{w, z}| &\geq& \delta_3 \l_1l_2.
\een

The parameters $l_1, l_2, \delta_3, K_3$  as well as the sizes of $\mathcal{K}_{I'}$ and $\mathcal{K}_{w, z}$ are controlled by Lemma \ref{lm:FiberingLemma}. By induction hypothesis, for each such a graph $\mathcal{K}_{w, z}$ there is a subgraph $\mathcal{K}'_{w, z} \subset \mathcal{K}_{w, z}$ with
\ben
|\mathcal{K}'_{w, z}| \geq \phi(l_1l_2, \delta_3, K_3)
\een
such that the $\mathcal{P}$-preimage of each neighborhood of $\mathcal{K}'_{w, z}$ is $\psi(l_1l_2, \delta_3, K_3)$-separating. Define $\mathcal{K}' \subset \mathcal{K}$ as
\ben
	\mathcal{K}' := \{(w \oplus w', z \oplus z'): (w, z) \in \mathcal{K}_{I'}, (w', z') \in \mathcal{K}'_{w, z} \}. 
\een
The size of $\mathcal{K}'$ is clearly at least $|\mathcal{K}_{I'}|\phi(l_1l_2, \delta_3, K_3)$. Next, the set of vertices of $\mathcal{K}_{I'}$ all lie in a one-dimensional affine subspace, so combining (\ref{eq:oneprimeseparation}) and Claim \ref{claimPsi} one concludes that the $\mathcal{P}$-preimage of each neighborhood of $\mathcal{K}'$ is 
$C\psi(l_1l_2, \delta_3, K_3)$-separating with some absolute constant $C > 0$ (for the additive energy one can take $C = 6$ as in (\ref{eq:oneprimeseparation})). Summing up, we conclude
that 
\ben \label{eq:step2psiphi}
	\psi_{x, y} := C\psi(l_1l_2, \delta_3, K_3); \,\,\, \phi_{x, y} := |\mathcal{K}_{I'}|\phi(l_1l_2, \delta_3, K_3)  
\een
is admissible for the pair of sets $(A'_1(x), A'_2(y))$ and the graph $G_{x, y}$. In turn, substituting $\psi_{x, y}$ and $\phi_{x, y}$ into the argument leading to (\ref{eq:firstreduction}) and Claim \ref{claimPsi}, one concludes that
\ben  \label{eq:admissiblepairs2}
	\psi(M_1M_2, \delta_1, K_1)   \max_{(x, y) \in G'_I}\psi_{x, y}  ;\,\,\,\,\, 	\phi(M_1M_2, \delta_1, K_1)  \min_{(x, y) \in G'_I} \phi_{x, y}
\een
is admissible for $(A_1, A_2)$ and the graph $G$. It remains to check that the quantities (\ref{eq:admissiblepairs2}) can indeed be bounded as (\ref{eq:admissiblepairinduction}). By saying that (\ref{eq:admissiblepairs2}) is admissible we mean that we can find a subgraph of $G$ of size at least  
$$
\psi(M_1M_2, \delta_1, K_1) \cdot  \max_{(x, y) \in G'_I}\psi_{x, y}
$$
such that the separating factors are most
$$
\phi(M_1M_2, \delta_1, K_1) \cdot \min_{(x, y) \in G'_I} \phi_{x, y}.
$$
Note that the quantities (\ref{eq:admissiblepairs2}) do depend on the structure of $A_1, A_2$. We are going to show, however, that they are uniformly bounded by (\ref{eq:admissiblepairinduction}) which are functions of $(N, \delta, K)$ only.

%Now we will use the bounds for $l_1, l_2, \delta_3, K_3, |K_{I'}|$ given by Lemma \ref{lm:FiberingLemma} to get meaningful information out of (\ref{eq:step2psiphi}).

First, since $(x, y) \in G'_I$ we have by (\ref{eq:FiberingLemmaDeltaSizes})
\ben
	 \delta_3 > c \log^{-3}(K_2/\delta_2) \delta_2.
\een
By (\ref{eq:FiberingLemmaDoublingConstants}) and (\ref{eq:FiberingLemmaDeltaSizes})
\ben
	\frac {K_2} {\delta_2} < \frac{CK \log(K) \log^3(K/\delta)} {\delta^{3}}
\een
so
\ben
 \log (K_2/\delta_2) < C\log(K/\delta).
\een
and
\ben \label{eq: delta_1delta_3}
 \delta_1 \delta_3 > c \log^{-3} (K/\delta) \delta_1 \delta_2 > c \log^{-6} (K/\delta) \delta.
\een
Next, by (\ref{eq:FiberingLemmaDoublingConstants})
\ben
 K_3  \leq C K_2 \log (K_2) \delta_2^{-2}.
\een
and by (\ref{eq:FiberingLemmaDeltaSizes})
\ben
\delta_2 &>& c \log^{-3}(K/\delta) \delta \label{eq:delta2delta}\\
K_2 &<& C  \delta^{-2} K \log K
\een
so
\ben
\log(K_2) \delta_2^{-2} &\leq& C \log^7 (K/\delta) \delta^{-2} \label{eq:logK2delta2}  \\
									&=& C (\delta^7 \log^7 (K/\delta)) \delta^{-9} < C \log^7 K \delta^{-9} \nonumber
\een
and
\ben \label{eq:K_1K_3bound}
K_1 K_3 \leq C K_1 K_2 \log (K_2) \delta_2^{-2} \stackrel{(\ref{eq:K_2}), (\ref{eq:logK2delta2})}{\leq} C \frac{K \log^8 K }{\delta^{11}}
\een

We turn to $|\mathcal{K}_{I'}|$ and $l_1l_2$. We have by (\ref{eq:FiberingLemmaSetSizes}), (\ref{eq:FiberingLemmaDeltaSizes}), (\ref{eq:FiberingLemmaBaseGraphSize}), (\ref{eq:FiberingLemmaFiberGraphSize}) that
\ben
|\mathcal{K}_{I'}|l_1l_2 &\geq& c \log^{-3}(K_2/\delta_2) \delta_2 (\delta_2^4 \log^{-2}(K_2/\delta_2)) |A_1(x)||A_2(y)| \nonumber \\
					&\geq& c\log^{-5} (K/\delta) \delta_2^5 m_1m_2 \stackrel {(\ref{eq:delta2delta})}{\geq} c \log^{-20} (K/\delta) \delta^5 m_1m_2  \label{eq:K_Il_1l_2} 
\een

Define 
\ben
 N'' := \min \{ N^{1/2},  \max\{ l_1l_2, c \log^{-20} (K/\delta) \delta^5 m_1m_2 \} \}
\een
By our choice of $t'$ it follows that $l_1l_2 \leq N''$. Thus, we may assume
\ben
\frac{l_1l_2}{N''}\phi(N'', \delta_3, K_3) \leq  \phi(l_1l_2, \delta_3, K_3).
\een
Indeed, the function $\phi(\cdot, \delta_3, K_3)$ may always be taken sublinear since one can take a sparser graph if needed \footnote{In what follows  $\phi(N, \delta, K)$ eventually will be of the form $N^{1-\tau + o(1)}$ for some $\tau > 0$. }.  
For the same reason, defining 
\ben
N' := \frac{M_1M_2l_1l_2}{N''}|\mathcal{K}_{I'}|
\een
we have by (\ref{eq:K_Il_1l_2}) that $M_1M_2 \leq N'$ and
\ben
\frac{M_1M_2}{N'}\phi(N', \delta_1, K_1) \leq  \phi(M_1M_2, \delta_1, K_1),
\een 
so
\ben \label{eq:phiNprimesbound}
	\phi(N', \delta_1, K_1) \phi (N'', \delta_3, K_3) &\leq& \frac{N'}{M_1M_2}\phi(M_1M_2, \delta_1, K_1) \frac{N''}{l_1l_2}\phi(l_1l_2, \delta_3, K_3) \nonumber \\
																		 &\stackrel {(\ref{eq:step2psiphi})}{=}&  \phi(M_1M_2, \delta_1, K_1) \phi_{x, y}
\een
On the other hand, 
\ben \label{eq:Nprimesproduct}
N'N'' &=& M_1M_2l_1l_2 |\mathcal{K}_{I'}| \stackrel {(\ref{eq:K_Il_1l_2})}{\geq} c\log^{-20} (K/\delta) \delta^5 M_1M_2m_1m_2 \\
		&\stackrel {(\ref{eq:FiberingLemmaSetSizes})}{\geq}& c  \delta^7 \log^{-22} (K/\delta) N.
\een
Also, since
\ben
	m_1m_2 > c \delta^{20} K^{-10} N^{1/2},   \nonumber
\een
it follows that
\ben
	c \delta^{45} K^{-11}N^{1/2} \leq N'' \leq N^{1/2} \nonumber
\een
and, since $N'' N' \leq N$,
\ben
  N' \leq C \delta^{-45} K^{11}N^{1/2}, \nonumber
\een
so
\ben \label{eq:Nprimessum}
 N' + N'' \leq C \delta^{-45} K^{11}N^{1/2}.
\een

We now have all the estimates to finish the proof. The bounds (\ref{eq: delta_1delta_3}), (\ref{eq:K_1K_3bound}), (\ref{eq:Nprimesproduct}), (\ref{eq:Nprimessum}) verify that the parameters 
\ben 
\delta' &:=& \delta_1, \,\,\, \delta'' := \delta_3 \nonumber \\
K' &:=& K_1, \,\,\, K'' := K_3 \nonumber \\
\een
and $N', N''$ indeed satisfy the constraints (\ref{eq:parameterconstraints}).  Next, it is trivial that $\psi(\cdot, \delta, K)$ can be taken monotone increasing in the first argument, so by (\ref{eq:M_1M_2upperbound}) and (\ref{eq:FirstcoordinateSplit2})
\ben 
		\psi_{*}(N, \delta, K) 	&\geq& C \psi(\max \{N^{1/2}, \frac{N}{m_1m_2} \}, \delta_1, K_1)  \psi( \min \{ N^{1/2}, m_1m_2 \}, \delta_3, K_3) \nonumber \\
					&\geq&  \psi(M_1M_2, \delta_1, K_1) \psi_{x, y} \label{eq:psistargeneralbound}
\een

Also, (\ref{eq:phiNprimesbound}) and (\ref{eq:admissiblepairs2}) verify that 
\ben \label{eq:phistargeneralbound}
\phi_{*}(N, \delta, K) &\leq& \phi(N', \delta_1, K_1) \phi (N'', \delta_3, K_3)  \nonumber \\
																	 &\leq& \phi(M_1M_2, \delta_1, K_1) \phi_{x, y}. \label{eq:phistargeneralbound}
\een
 It follows that the pair $(\psi_{*}, \phi_{*})$ is indeed admissible since (\ref{eq:psistargeneralbound}) and (\ref{eq:phistargeneralbound}) hold for all base points $(x, y) \in G'_I$ and thus uniformly bound (\ref{eq:admissiblepairs2}). 
\end{proof}
 
\section{A better admissible pair}

With Lemma \ref{lm:InductionStepLemma} at our disposal we can start with the data $(N, \delta, K)$ and  reduce the problem to the case of smaller and smaller $N$ and $K$ with reasonable losses in $\delta$. The process can be described by a binary a tree where each node with the data $(N, \delta, K)$ splits into two children with the attached data being approximately equal $(N^{1/2}, \delta', K')$ and $(N^{1/2}, \delta', K'')$, with $K'K''$ roughly equal to $K$ and $\delta'\delta''$ roughly equal to $\delta$. Thus, when the height of the tree is about $\log \log K$, the $K$'s in the most of the nodes should be small enough so that Lemma \ref{lm:FreimanAdmissiblePair} becomes non-trivial. Going from the bottom to the top we then recover an improved admissible pair of functions at the root node. 

\begin{lemma} \label{lm:beteradmissiblepair}
For any $\gamma > 0$ there exists $C(\gamma) > 0$ such that the pair
\ben
	\phi(N, \delta, K) := \left( \frac{\delta}{K}\right)^{C \log \log (K/\delta)} N \label{eq:betterphi}\\
	\psi(N, \delta, K) := \exp (\log (K/\delta)^{C/\gamma}) N^{\gamma} \label{eq:betterpsi}
\een  
is admissible. 
\end{lemma}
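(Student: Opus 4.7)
Iterate Lemma \ref{lm:InductionStepLemma} exactly $J := \lceil \log_2\log_2(K/\delta) \rceil$ times, starting from the Freiman-type pair of Lemma \ref{lm:FreimanAdmissiblePair}. Inductively set $(\psi_0, \phi_0)$ to the pair of Lemma \ref{lm:FreimanAdmissiblePair} and $(\psi_{j+1}, \phi_{j+1}) := (\psi_{j,*}, \phi_{j,*})$ as produced by Lemma \ref{lm:InductionStepLemma}. Unwinding the $\max/\min$ in (\ref{eq:admissiblepairinduction}), $\psi_J(N, \delta, K)$ is at most $C^{O(2^J)}$ times the supremum of $\prod_{v \in L_J} \psi_0(N_v, \delta_v, K_v)$ over all complete binary trees of depth $J$ rooted at data $(N, \delta, K)$ whose parent-child edges satisfy (\ref{eq:parameterconstraints}) (here $L_J$ denotes the set of depth-$J$ leaves), and $\phi_J$ is at least the analogous infimum of $\prod_{v \in L_J} \phi_0(N_v, \delta_v, K_v)$. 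It thus suffices to bound these extrema uniformly in the tree.

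\textbf{Parameter tracking.} Fix any such tree and write $X_v := \log(K_v/\delta_v)$. Combining the bounds on $K_1K_2$ and $\delta_1\delta_2$ in (\ref{eq:parameterconstraints}) along any root-to-leaf path yields the coupled recurrence $X_{i+1} \leq X_i + O(\log(1/\delta_i)) + O(\log\log(K/\delta))$ and $\log(1/\delta_{i+1}) \leq \log(1/\delta_i) + O(\log\log(K/\delta))$, which telescope to the pointwise bound (P1) $\log(1/\delta_v) \leq O(\log(K/\delta))$ and $X_v \leq O(\log(K/\delta) \log\log(K/\delta))$ at every node. Summing the same inequalities over nodes at each level and using $2^J \approx \log(K/\delta)$ gives (P2) $\sum_{v \in L_J} X_v \leq O(\log(K/\delta) \log\log(K/\delta))$. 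From $c\delta^7 \log^{-22}(K/\delta) N \leq N'N'' \leq N$ and $N' + N'' \leq C\delta^{-45}K^{11} N^{1/2}$ iterated $J$ times, one gets (P3) $\prod_{v \in L_J} N_v \geq N (\delta/K)^{O(\log\log(K/\delta))}$ and $N_v \leq N^{1/2^J} (K/\delta)^{O(1)}$ along any path.

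\textbf{Bounding $\phi_J$ and $\psi_J$.} For the $\phi$-bound, combining $\phi_0(v) = (\delta_v/K_v)^{C_0} N_v$ with (P2) and (P3) gives
\begin{align*}
\phi_J &\geq \prod_v (\delta_v/K_v)^{C_0} \prod_v N_v \geq (\delta/K)^{C \log\log(K/\delta)} N,
\end{align*}
which is (\ref{eq:betterphi}) with $C = O(C_0)$. For the $\psi$-bound we exploit the $\min$ in (\ref{eq:Freimanadmissiblepsi}): set $T := \lceil C'\log\log(K/\delta)/\gamma \rceil$ with $C'$ a large absolute constant and split $L_J = L_S \sqcup L_L$ with $L_S := \{v : X_v \leq T\}$. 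By Markov applied to (P2), $|L_L| \leq \gamma \cdot 2^J$. Estimating $\psi_0(v) \leq \exp((K_v/\delta_v)^{C_0}) \leq \exp(\log^{C_0 C'/\gamma}(K/\delta))$ on $L_S$ and $\psi_0(v) \leq N_v$ on $L_L$, one obtains
\begin{align*}
\psi_J &\leq C^{O(2^J)} \cdot \exp\bigl(2^J \log^{C_0 C'/\gamma}(K/\delta)\bigr) \cdot N^\gamma \cdot (K/\delta)^{O(|L_L|)} \leq \exp\bigl(\log^{C/\gamma}(K/\delta)\bigr) N^\gamma
\end{align*}
for $C = C(\gamma)$ sufficiently large, giving (\ref{eq:betterpsi}).

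\textbf{Main obstacle.} The delicate step is (P1)--(P2): (\ref{eq:parameterconstraints}) only controls the \emph{products} $K_1 K_2$ and $\delta_1 \delta_2$, so a single child can in principle absorb essentially all of the parent's doubling and density defects. What saves us is the precise match $2^J = \log(K/\delta)$: the $11\log(1/\delta)$ correction in the $K$-recurrence contributes $O(J\log(K/\delta)) = O(\log(K/\delta) \log\log(K/\delta))$ both along any path and in summed form over leaves, which is exactly the scale allowing the $\exp((K_v/\delta_v)^{C_0})$ terms to stay polylog in $K/\delta$ on average and thus enabling the Markov split for $\psi_J$. Any larger $J$ would push $X_v$ past polylog scale and break the match with (\ref{eq:betterpsi}).
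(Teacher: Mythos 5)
Your proposal is correct and takes essentially the same route as the paper: iterate Lemma \ref{lm:InductionStepLemma} down a binary tree of depth $\approx \log\log(K/\delta)$ rooted at the Freiman-type pair of Lemma \ref{lm:FreimanAdmissiblePair}, bound the products of the leaf parameters, and handle $\psi$ by thresholding $K_v/\delta_v$ at $\log(K/\delta)^{O(1/\gamma)}$ and showing only a $\gamma$-fraction of leaves exceed it (your Markov step is exactly the paper's bound (\ref{eq:AJbound})). The only slip is the pointwise claim $N_v \leq N^{1/2^J}(K/\delta)^{O(1)}$: accumulating the $\delta^{-45}K^{11}$ losses along a root-to-leaf path gives $N_v \leq N^{1/2^J}(K/\delta)^{O(\log\log(K/\delta))}$, but this extra factor is still absorbed into $\exp(\log^{C/\gamma}(K/\delta))$, so the argument goes through.
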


\begin{proof}
Take an integer $t = 2^l$ to be specified later ($l$ is going to be the height of the tree and $t$ the total number of nodes). Each node then has an index $\nu \in \{0, 1\}^l$.
We start with an admissible pair $\phi_0, \psi_0$ given by Lemma \ref{lm:FreimanAdmissiblePair} at the bottom-most level.
Going recursively from the leaves to the root, we have by Lemma  \ref{lm:InductionStepLemma} that for the levels $i = 1, \ldots, l$ the pairs

\ben \label{eq:admissiblepairinductionleveli}
		\psi_{i} &:=& \max C \psi_{i-1} (N', \delta', K') \psi_{i-1}  (N'', \delta'', K'') \\
		\phi_{i} &:=& \min \phi_{i-1}(N', \delta', K') \phi_{i-1}(N'', \delta'', K''),
\een
are admissible (with the $\max$ and $\min$ taken over the set of parameters constrained by (\ref{eq:parameterconstraints})). Thus,  at the root node we have the admissible pair $\psi := \psi_{l-1}, \phi := \phi_{l-1}$ given by
\ben
	\psi(N, \delta, K) &:=& C^{2^l} \prod_{\nu \in \{0, 1 \}^l} \psi_0(N'_\nu, \delta'_\nu, K'_\nu)	\label{eq:betterpsiproductformula}\\
	\phi(N, \delta, K) &:=& \prod_{\nu \in \{0, 1 \}^l} \phi_0(N_\nu, \delta_\nu, K_\nu) \label{eq:betterphiproductformula} \\
\een
for some data $(N_\nu, \delta_\nu, K_\nu)$ and (possibly different) $(N'_{\nu}, \delta'_{\nu}, K'_{\nu})$ at the leaf nodes of the tree which attain the respective maxima and minima. For non-leaf tree nodes $\nu$, denoting by $\{\nu, 0\}$ and $\{\nu, 1\}$ the left and right child of $\nu$ respectively, one has
\ben  \label{eq:parameterconstraintslevel}
   c \delta_\nu^7 \log^{-22} (K_\nu/\delta_\nu) N_\nu &\leq& N_{\nu, 0}N_{\nu, 1} \leq N_\nu \\ \nonumber
   N_{\nu, 0} + N_{\nu, 1} &\leq& C \delta_\nu^{-45} K_\nu^{11}N_\nu^{1/2}	 \label{eq:Nsquarerootsplitting} \\
   K_{\nu, 0}K_{\nu, 1} &\leq& C \frac{K_\nu \log^8 K_\nu }{\delta_\nu^{11}} \\ \nonumber
   \delta_{\nu, 0} \delta_{\nu, 1} &\geq& c \log^{-6} (K_\nu/\delta_\nu) \delta_\nu. \nonumber
\een
and similarly for $(N'_{\nu}, \delta'_{\nu}, K'_{\nu})$.

In what follows we assume that $N$ is large enough so that $\log K_\nu > C$ and $\log(\delta_\nu^{-1}) > c^{-1}$ and the constants $C, c$ can be swallowed by an extra power of $\log (K/\delta)$.

We have
\beq
 \log \frac{K_{\nu, 0}}{\delta_{\nu, 0}} +  \log \frac{K_{\nu, 1}}{\delta_{\nu, 1}} < 15 \log \frac{K_\nu}{\delta_\nu}
\eeq
so for an arbitrary $1 < l' \leq l$
\beq \label{eq:Kdeltanubound}
   \max_{  \nu \in \{0, 1\}^{l'} } \log \frac{K_\nu}{\delta_\nu} <  15^{l'} \log \frac{K}{\delta}.
\eeq
Next, it follows (see the original paper for more details) from (\ref{eq:Kdeltanubound}) that 
\beq \label{eq:deltanuproductbound}
 \prod_{\nu \in \{0, 1\}^{l'}} \delta_\nu > 15^{-6l' \cdot 2^{l'}} \log (\frac{K}{\delta})^{-6\cdot 2^{l'}} \delta
\eeq
and
\beq \label{eq:Knuproductbound}
  \prod_{\nu \in \{0, 1\}^{l'}} K_\nu < 15^{50 \cdot l' 2^{l'}} \log (\frac{K}{\delta})^{50 \cdot 2^{l'}} \delta^{-7l'}K
\eeq
and
\beq \label{eq:Nnuproductbound}
	\prod_{\nu \in \{0, 1\}^{l'}} N_\nu > 15^{-100 \cdot l' 2^{l'}} \left(\log \frac{K}{\delta} \right)^{-200 \cdot 2^{l'}} \delta^{30 l'} N.
\eeq

Taking 
$$
l := \lfloor \log \log (K/\delta) \rfloor
$$ and substituting (\ref{eq:deltanuproductbound}), (\ref{eq:Knuproductbound}), (\ref{eq:Nnuproductbound}) into (\ref{eq:betterphiproductformula}) and (\ref{eq:Freimanadmissiblephi}) we get
$$ 
 \prod_{\nu \in \{0, 1\}^{l}} (\frac{\delta_\nu}{K_\nu})^C N_\nu \geq \left( \frac{\delta}{K}\right)^{C \log \log (K/\delta)} N
$$
for some suitable $C > 0$. The elementary but a bit tedious calculations can be found in the original paper. We note however, that it is natural to expect that the resulting function should look like (\ref{eq:betterphi}). Ignoring $\delta$'s, we loose at each node at most by a multiplicative factor of $ \log^{-C} K$, totaling to the $(\log K)^{-C2^l}$ loss, which is approximately $K^{-C \log \log K}$.

We now turn to $\psi$. Again, we omit the details but only sketch the main idea of the calculation. For the sake of notation we use again $(N_\nu, \delta_\nu, K_\nu)$ instead of $(N'_\nu, \delta'_\nu, K'_\nu)$. The bounds above, however, still hold.

We split the data $(N_\nu, \delta_\nu, K_\nu)$ into two parts, $I \cup J = \{0, 1 \}^l$, such that
$$
	\frac{K_\nu}{\delta_\nu} < A : \nu \in I
$$
and
$$
	\frac{K_\nu}{\delta_\nu} \geq A : \nu \in J,
$$
with the threshold $A$ specified in due course. 

By (\ref{eq:deltanuproductbound}) and (\ref{eq:Knuproductbound}) it is easy to see that $|J|$ is rather small:
\ben \label{eq:AJbound}
 A^{|J|} \leq \prod_{\nu \in \{0, 1 \}^l}  \frac{K_\nu}{\delta_\nu} < 15^{100 \cdot l 2^{l}} \log (\frac{K}{\delta})^{100 \cdot 2^{l}} \delta^{-10l}K.
\een
Set $t := 2^l$ and take
$$
 \log A := 10^3 \gamma^{-1} l = \frac{10^3 \log \log (K/\delta)}{\gamma}.
$$
It follows from (\ref{eq:AJbound}) that
$$
|J| \log A  \leq 500 l t + 100 t l + 10 t l + t < 10^3 l t,
$$
so 
$$
\frac{|J|}{t} < \frac{10^3 l}{\log A} = \gamma.
$$

By (\ref{eq:parameterconstraintslevel}) we have that at the bottom level each $N_\nu \approx N^{\frac{1}{2^l}}$, so we can estimate by Lemma \ref{lm:FreimanAdmissiblePair} (ignoring the logarithmic losses at each node, which one checks are acceptable)
$$
\psi \leq C^t \prod_{\nu \in \{0, 1\}^l} \min \{e^{(\frac{K_\nu}{\delta_\nu})^C}, N_\nu \}  \approx e^{|I|A^C}N^{\frac{|J|}{t}} < e^{tA^C}N^{\frac{|J|}{t}} < \exp (\log (K/\delta)^{C/\gamma}) N^{\gamma}.
$$

\end{proof}

\section{A strong admissible pair}

Finally, in this section we will use Lemma \ref{lm:beteradmissiblepair} to get an even better pair of admissible functions. 
\begin{lemma} \label{lm:finaldamissiblepairs}
Given $0 < \tau, \gamma < 1/2$ there exist positive constants $A_i(\tau, \gamma), B_i (\tau, \gamma), i= 1, 2, 3$ such that 
\ben
  \phi := K^{-A_1} \delta^{A_2 \log \log N} e^{A_3 (\log \log N)^2 } N^{1 - \tau} \label{eq:bestphi} \\
  \psi :=   K^{B_1} \delta^{-B_2 \log \log N} e^{-B_3 (\log \log N)^2 } N^{\gamma} \label{eq:bestpsi}
\een
are admissible. 

\end{lemma}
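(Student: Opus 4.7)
The plan is to bootstrap Lemma \ref{lm:beteradmissiblepair} by iterating Lemma \ref{lm:InductionStepLemma} one more time, in direct analogy to how Lemma \ref{lm:beteradmissiblepair} itself was obtained by iterating Lemma \ref{lm:InductionStepLemma} starting from the Freiman-type pair of Lemma \ref{lm:FreimanAdmissiblePair}. Concretely, I set up a binary tree of depth $l = C(\tau,\gamma)\log\log N$, place the pair $(\phi_0, \psi_0)$ from Lemma \ref{lm:beteradmissiblepair} (with an auxiliary parameter $\gamma'<\gamma$) at each of its $2^l$ leaves, and apply Lemma \ref{lm:InductionStepLemma} at every internal node going upward. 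Unrolling this recursion yields the root-level admissible pair
$$
\phi_l = \prod_{\nu \in \{0,1\}^l} \phi_0(N_\nu,\delta_\nu,K_\nu), \qquad \psi_l = C^{2^l}\prod_{\nu \in \{0,1\}^l}\psi_0(N_\nu,\delta_\nu,K_\nu),
$$
where the leaf data $(N_\nu,\delta_\nu,K_\nu)$ satisfy the recurrence (\ref{eq:parameterconstraints}) at each of the $l$ levels.

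The leaf data are estimated exactly as in the proof of Lemma \ref{lm:beteradmissiblepair}: iterating (\ref{eq:parameterconstraints}) $l$ times produces bounds directly analogous to (\ref{eq:Kdeltanubound})--(\ref{eq:Nnuproductbound}), namely $\prod K_\nu$ polynomial in $K$ with sub-polynomial losses, $\prod \delta_\nu^{-1} \lesssim \delta^{-O(l)}$, $\max_\nu \log(K_\nu/\delta_\nu) \leq 15^l \log(K/\delta)$, and $\prod N_\nu \geq (\log(K/\delta))^{-O(2^l)}\delta^{O(l)} N$. The source of the degraded exponent $N^{1-\tau}$ in $\phi_l$ is precisely this last bound: for $l = C(\tau)\log\log N$ sufficiently large, the accumulated polylog loss $(\log(K/\delta))^{O(2^l)}$ reaches $N^\tau$, shaving $\prod N_\nu$ down to $N^{1-\tau}$. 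On the other hand, each leaf factor $\phi_0(N_\nu,\delta_\nu,K_\nu) = (\delta_\nu/K_\nu)^{C\log\log(K_\nu/\delta_\nu)} N_\nu$ now carries only a \emph{bounded} exponent $C\log\log(K_\nu/\delta_\nu) = O(l + \log\log(K/\delta)) = O(\log\log N)$. Consequently, multiplying these over the $2^l$ leaves produces merely polynomial $K^{-A_1}$ and sub-polynomial $\delta^{A_2 \log\log N}$ dependences --- a dramatic improvement over the super-polynomial $(\delta/K)^{C\log\log(K/\delta)}$ of Lemma \ref{lm:beteradmissiblepair}. The correction $e^{A_3(\log\log N)^2}$ absorbs the compounded logarithmic losses of order $2^l \cdot l$. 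The $\psi$-side is treated analogously, choosing $\gamma'$ slightly less than $\gamma$ so that $\prod N_\nu^{\gamma'} \leq N^\gamma$ after the sub-$N^\gamma$ losses are absorbed, while the $\exp(\log(K_\nu/\delta_\nu)^{C/\gamma'})$ factors at the leaves aggregate into $K^{B_1} \delta^{-B_2 \log\log N} e^{-B_3(\log\log N)^2}$.

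The main obstacle is the careful bookkeeping of multiplicative losses that compound across $2^l$ nodes: each of the $l$ levels contributes log and $\delta$-power losses that must be tracked through the tree, analogously to the chain of inequalities leading to (\ref{eq:deltanuproductbound})--(\ref{eq:Nnuproductbound}). The key structural observation is the balance $l \approx \log\log N$: deep enough to force the per-leaf exponent $\log\log(K_\nu/\delta_\nu)$ to be $O(\log\log N)$ rather than super-polynomial in $K$, yet shallow enough that the aggregate $N$-loss is at most $N^\tau$. Once this balance is set, plugging the pair of Lemma \ref{lm:beteradmissiblepair} into the leaves and reading off the root-level bounds yields the claimed form of $\phi$ and $\psi$ with suitable constants $A_i, B_i$ depending on $(\tau, \gamma)$.
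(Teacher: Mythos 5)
Your plan breaks down at the aggregation step, specifically at the claim that ``multiplying these over the $2^l$ leaves produces merely polynomial $K^{-A_1}$.'' The constraints (\ref{eq:parameterconstraints}) only control the \emph{products} $K'K''$ at each split, so nothing prevents essentially all of the multiplicative structure from concentrating at a single leaf $\nu_0$ with $K_{\nu_0}\approx K$ (up to the polylog/$\delta$ losses) and $K_\nu = O(1)$ elsewhere. That one leaf contributes the factor $(\delta_{\nu_0}/K_{\nu_0})^{C\log\log(K_{\nu_0}/\delta_{\nu_0})}\approx K^{-C\log\log K}$ to $\phi_l$ and $\exp\bigl(\log(K_{\nu_0}/\delta_{\nu_0})^{C/\gamma'}\bigr)\approx \exp\bigl((\log K)^{C/\gamma'}\bigr)$ to $\psi_l$ --- exactly the super-polynomial-in-$K$ losses of Lemma \ref{lm:beteradmissiblepair} that this lemma is supposed to remove, and going deeper in the tree does nothing about them. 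In general your aggregate has $K$-exponent of order $\log\log N$ (your own observation that the per-leaf exponent is $O(\log\log N)$ gives no better), i.e. $A_1$ would depend on $N$ rather than only on $(\tau,\gamma)$; pushed through the final section this yields $E_+(\mathcal{A})\le K^{O(\log\log|\mathcal{A}|)}|\mathcal{A}|^{2+\gamma}$, not $K^{\Lambda}|\mathcal{A}|^{2+\gamma}$, so the weaker form is not acceptable. A secondary but related point: a one-shot product of leaf bounds also cannot produce the correction factors $\delta^{\pm\,\cdot\,\log\log N}$ and $e^{\mp\,\cdot\,(\log\log N)^2}$ with uniform $A_2,A_3,B_2,B_3$; in particular $e^{-B_3(\log\log N)^2}<1$ makes (\ref{eq:bestpsi}) \emph{smaller} than $K^{B_1}N^\gamma$, which no product of leaf $\psi_0$'s will give you for free.

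The paper's proof is structurally different and supplies precisely the missing mechanism: the claimed pair (\ref{eq:bestphi}), (\ref{eq:bestpsi}) is itself taken as the induction hypothesis in an induction on scales. One first fixes a constant threshold $\bar N(\tau,\gamma)$ and tunes $A_1\sim C\log\log\bar N$ and $B_1\sim(\log\bar N)^{1-c\gamma}$ so that for $N\le\bar N$ the claimed pair is dominated by the pair of Lemma \ref{lm:beteradmissiblepair} or is trivial (the regime of huge $K/\delta$ is disposed of by making $\psi>N$). Then a \emph{single} application of Lemma \ref{lm:InductionStepLemma} passes from scales $\le X$ to scales $\le X^2$: since $(K'K'')^{-A_1}\ge (CK\delta^{-10}\log^{10}N)^{-A_1}$ and $\delta'\delta''\ge \delta\log^{-6}N$, the step losses are polylogarithmic and are absorbed by the $\delta^{A_2\log\log N}$ term and by the superadditivity of $(\log\log N)^2$ under $N\mapsto N'N''$ with $N',N''\approx N^{1/2}$ --- that is exactly why those correction factors appear in the statement --- while the exponent of $K$ stays fixed at $A_1$ (resp.\ $B_1$) at every scale, no matter how lopsidedly $K$ splits. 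Your proposal contains neither the bootstrapping at a fixed threshold nor the self-reproducing functional form, and without them the constant-exponent dependence on $K$ cannot be reached by re-running the tree of Lemma \ref{lm:beteradmissiblepair} with better leaves.
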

\begin{proof}

The strategy of the proof is as follows. We start with already a not-so-bad admissible pair given by Lemma \ref{lm:beteradmissiblepair} and improve it by repeated application of Lemma \ref{lm:InductionStepLemma}. 

The idea is that first we find a fixed threshold $\bar{N}(\tau, \gamma)$ such that the pair (\ref{eq:bestphi}), (\ref{eq:bestpsi}) is either trivial or worse than that given by Lemma \ref{lm:beteradmissiblepair} if $N \leq \bar{N}$. One can achieve this by fine-tuning the constants $A_1, B_1$. 

After such a bootstrapping we have an intermediate admissible pair, say $(\phi',\psi')$, defined by 
(\ref{eq:bestphi}), (\ref{eq:bestpsi}) if $N \leq \bar{N}$ 
and 
by (\ref{eq:betterphi}), (\ref{eq:betterpsi}) otherwise.

Next, we use Lemma \ref{lm:InductionStepLemma} and prove that for a suitable choice of $A_2, A_3, B_2, B_3$ the following induction step holds.

Assume $(\phi_X,\psi_X)$ is an admissible pair defined as (\ref{eq:bestphi}), (\ref{eq:bestpsi}) if $N \leq X$ and by (\ref{eq:betterphi}), (\ref{eq:betterphi}) otherwise. Then the pair $(\phi_{X^2},\psi_{X^2})$ defined as (\ref{eq:bestphi}), (\ref{eq:bestpsi}) if $N \leq X^2$ and by (\ref{eq:betterphi}), (\ref{eq:betterpsi}) otherwise, is also admissible. 

Iterating, one then concludes that (\ref{eq:bestphi}), (\ref{eq:bestpsi}) is admissible for all $(N, \delta, K)$.

%Using the tree structure as in the proof of Lemma \ref{lm:beteradmissiblepair} we start with the %root node with the data $(N, \delta, K)$ and estimate
%the admissible pair of functions at each node as the admissible functions evaluated at the data %attached to the children. At the tree 
%height $t$ the parameter $N_\nu$ of the triple $(N_\nu, \delta_\nu, K_\nu)$ is at scale %$N^{1/2^t}$. Taking $t$ is as large as $\log \log N$, we can get below any fixed threshold.

%In turn, if the pair (\ref{eq:bestphi}), (\ref{eq:bestpsi}) is admissible at some level $t$ (so that %$N$ is below a fixed threshold,  $\bar{N}(\tau, \gamma)$) with some set of constants $A_i, %B_i$ (which may depend on $\bar{N}$), it is sufficient (by induction) to check by plugging %(\ref{eq:bestphi}), (\ref{eq:bestpsi}) into Lemma \ref{lm:InductionStepLemma} that the pair  %(\ref{eq:bestphi}), (\ref{eq:bestpsi}), with the same set constants $A_i, B_i$ is admissible at the %level $t-1$. 

%Assume now that we are at the lowest level of the tree so that $N \leq \bar{N}$ and $\log N %\approx \log  \bar{N}$. $\bar{N}$ is fixed. We fix only $A_1$ and $B_1$ and define $A_2, B_2, %A_3, B_3$ in due course (it will be important that $A_3 > A_2 > A_1$ and $B_3 > B_2 > B_1$) %. 
Let us see how this induction scheme can be implemented. Starting with (\ref{eq:bestphi}), (\ref{eq:bestpsi}) of Lemma \ref{lm:beteradmissiblepair} we should take $A_1, B_1$ and a threshold $\bar{N}(\delta, \gamma)$ such that for $N \leq \bar{N}$
\ben
	\left( \frac{\delta}{K}\right)^{C \log \log (K/\delta)} N > (\ref{eq:bestphi}) \\
	\exp (\log (K/\delta)^{C/\gamma}) N^{\gamma} < (\ref{eq:bestpsi})
\een
For (\ref{eq:bestphi}) it's sufficient to take $A_1 = C\log \log \bar{N}$ with some $C(\tau) > 0$. (\ref{eq:bestpsi}) is more tricky, as later on it will be important that $B_3 > B_2 > B_1$.
It sufficies to guarantee that 
\ben \label{eq:Kboundexponencial}
 \log \left( \frac{K}{\delta} \right)^{\frac{ C}{\gamma}} < \frac{\gamma}{4} \log N
\een
and
\ben \label{eq:B_3boundexponenial}
e^{B_3 (\log \log N)^2} < N^{\frac{\gamma}{2}}.
\een

The bound (\ref{eq:Kboundexponencial}) does not hold only if $K/\delta$ is rather large,
$$
\frac{K}{\delta} > e^{\log^{c\gamma} N}
$$
for some $c(C, \gamma) > 0$.  In this case it suffices to take $B_1$ so large that (\ref{eq:bestpsi}) $> N$ and thus trivially admissible. To this end it suffices to take
$$
B_1 := (\log \bar{N})^{1 - c\gamma}
$$
and make the constraint that say
$$
B_3, B_2 < 10 B_1  \log \log {\bar{N}}.
$$ 

Summing up, we have found some fixed threshold $\bar{N}(\tau, \gamma)$  at which (\ref{eq:bestphi}), (\ref{eq:bestpsi}) become admissible, with fixed $A_1, B_1$ and some freedom to define the constants $A_2, B_2, A_3, B_3$.

Now, assuming that $N', N''$ are at the scale so that (\ref{eq:bestphi}), (\ref{eq:bestpsi}) are admissible with the data $(N', \delta', K'); (N'', \delta'', K'')$ we will show that  (\ref{eq:bestphi}), (\ref{eq:bestpsi}) are also admissible for the data $(N, \delta, K)$ with $N \approx N'N''$.

Assuming $B_1$ (or $\bar{N}$) is large enough we may assume that 
\ben \label{eq:Kdeltasmall}
\frac{K}{\delta} < N^{10^{-3}}.
\een
as otherwise (\ref{eq:bestpsi})  $ > N$ which is trivially admissible.  

We need to estimate 
$$
 \psi (N', \delta', K') \psi  (N'', \delta'', K'') 
$$
from above and 
$$ 
 \phi(N', \delta', K') \phi(N'', \delta'', K''),
$$
from below in order to verify that (\ref{eq:bestphi}), (\ref{eq:bestpsi}) are admissible for $(N, \delta, K)$. By (\ref{eq:Kdeltasmall}), the constraints (\ref{eq:parameterconstraints}) can be relaxed to
\ben
N \geq N'N'' &>& N\left( \frac{\delta}{\log N}\right)^{40} > N^{99/100} \label{eq:N_1N_2simple} \\
N' + N'' &<& N^{1/2} \left(\frac{K}{\delta} \right)^{40} < N^{1/2 + 1/40}  \label{eq:N_1plusN_2simple}\\
\delta' \delta'' &>& \frac{\delta}{\log^6 N} \label{eq:deltadeltaprimesimple}\\
K'K'' &<& \delta^{-10} (\log N)^{10}K \label{eq:KprimeKprime}
\een 

From (\ref{eq:N_1N_2simple}) and (\ref{eq:N_1plusN_2simple}) we have (with room to spare)
\beq
N^{1/2 - 1/20} < N', N'' < N^{1/2 + 1/20}
\eeq
and assuming $N$ is large enough
\beq
\frac{99}{100} \log \log N < \log \log N', \log \log N'' < \log \log N + \frac{20}{11}.
\eeq
With the constraints above, it suffices to verify (writing $\l \l$ for $\log \log $ like in the original paper) that
\ben \label{eq:bestphisubstitution}
(K'K'')^{-A_1} (\delta')^{A_2 \l \l N'} (\delta'')^{A_2 \l \l N''} e^{A_3[(\l \l N')^2 + (\l \l N'')^2]} (N' N'')^{1-\tau}
\een
is indeed always bounded by (\ref{eq:bestphi}).
We can bound (\ref{eq:bestphisubstitution}) by
\ben
K^{-A_1} \delta^{A_2 \l \l N} e^{A_3 (\l \l N)^2} N^{1-\tau} u \cdot v
\een
where
\ben
u &=& (\log N)^{-10 A_1 - 6A_2 \l \l N - 40} e^{\frac{9}{10}A_3 (\l \l N)^2} \\
v &=& \delta^{14A_1 - \log \frac{20}{11} A_2 + 40}.
\een
For suitable choices of $A_2, A_3 > A_1$ both $u, v > 1$ so  (\ref{eq:bestphi}) is admissible. 

Similarly for (\ref{eq:bestpsi}) we have
\ben
(K'K'')^{B_1} (\delta')^{-B_2 \l \l N'} (\delta'')^{-B_2 \l \l N''} e^{-B_3[(\l \l N')^2 + (\l \l N'')^2]} (N' N'')^{\gamma} \\
					<K^{B_1} \delta^{-B_2 \l \l N} e^{-B_3 (\l \l N)^2} N^{\gamma} u \cdot v
\een
with
\ben
u &=& (\log N)^{10 B_1 + 6B_2 \l \l N} e^{-\frac{9}{10}B_3 (\l \l N)^2} \\
v &=& \delta^{-14B_1 + \log \frac{20}{11} B_2 }.
\een
Again, by taking suitable $B_3 > B_2 > B_1$ we make $u, v < 1$ so (\ref{eq:bestpsi}) is admissible. It closes the induction on scales argument and finishes the proof.

\end{proof}

\section{Finishing the proof}
\begin{proof}
Let $\gamma, \tau > 0$ be constants to be defined later. We start with $\mathcal{A}$ and it's $\mathcal{P}$-image $A$. Since $|\mathcal{A} \mathcal{A}| \leq K |\mathcal{A}|$ we have $|A+A| \leq K$. Define $N_1 = N_2 = |A|; \, N := N_1N_2 = |A|^2$ and take $G$ to be the full graph $A \times A$, so $\delta = 1$. By Lemma \ref{lm:finaldamissiblepairs} and the definition of admissible pairs, there is $G' \subset G$ of size
$$
K^{-A_1} e^{A_3 (\log \log N)^2 } N^{1 - \tau}
$$
so that for any vertex $v \in V(G')$ the $\mathcal{P}$-preimage of $N_{G'}(v)$ is 
$$
K^{B_1}  e^{-B_3 (\log \log N)^2 } N^{\gamma}
$$
separating. There is $v \in V(G')$ such that 
$$
|N_{G'}(v)| > K^{-A_1} e^{A_3 (\log \log N)^2 } N^{1/2 - \tau}
$$
which means that there is $\mathcal{A'} \subset \mathcal{A}$ of size at least 
$$
K^{-A_1} |\mathcal{A}|^{1 -  2\tau}
$$
which is 
$$
K^{B_1} e^{-B_3 (\log \log N)^2 } N^{\gamma}
$$
separating. In particular, 
\beq
E_+(\mathcal{A'}) \leq K^{2B_1} e^{-2B_3 (\log \log N)^2 } N^{2\gamma} |\mathcal{A'}|^2 \leq K^{2B_1} |\mathcal{A}|^{2 +4 \gamma }.
\eeq

It remains to show that if a fairly large subset $\mathcal{A'}$ has small energy then $\mathcal{A}$ itself has small energy. We formulate it as a separate combinatorial lemma in a slightly more general setting.
\begin{lemma} \label{lm:energycoverlemma}
Let $A_i, i = 1, \ldots, L$ be a family of sets such that 
$$
A \subset \bigcup_{1\leq i \leq L } A_i,
$$ 
and, moreover, each $a \in A$ is covered by at least $M$ sets $A_i$. Then
$$
E_+(A) \leq \frac{1}{M^4} \left( \sum_{1 \leq i \leq L} E_+(A_i)^{1/4} \right)^4
$$

\end{lemma}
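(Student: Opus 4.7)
The plan is to mimic the two-fold Cauchy--Schwarz argument used in the motivating section for the trivial separation bound (equations (\ref{eq2})--(\ref{eq3})), combined with the covering hypothesis to produce the factor $M^{-4}$.

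First I would translate the covering assumption into a pointwise inequality between indicator functions: the hypothesis that each $a \in A$ lies in at least $M$ of the $A_i$ is exactly the statement
$$
M \cdot 1_A(x) \leq \sum_{1 \leq i \leq L} 1_{A_i}(x)
$$
for every integer $x$. Convolving this non-negative inequality with itself yields the pointwise bound
$$
M^2 (1_A * 1_A)(x) \leq \sum_{1 \leq i, j \leq L} (1_{A_i} * 1_{A_j})(x).
$$

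Next I would take the $L^2$ norm of both sides and use the triangle inequality to move the sum over $(i,j)$ outside. Recalling from the Notation section that $\|1_{A_i} * 1_{A_j}\|_2^2 = E_+(A_i, A_j)$, and applying the standard Cauchy--Schwarz bound $E_+(A_i, A_j) \leq E_+(A_i)^{1/2} E_+(A_j)^{1/2}$ (which follows from expanding $\sum_x r_{A_i + A_j}(x)^2$), one obtains
$$
M^2 E_+(A)^{1/2} \leq \sum_{i, j} \|1_{A_i} * 1_{A_j}\|_2 \leq \sum_{i, j} E_+(A_i)^{1/4} E_+(A_j)^{1/4} = \Bigl(\sum_{i} E_+(A_i)^{1/4}\Bigr)^2.
$$
Squaring both sides and dividing by $M^4$ gives the claimed inequality.

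There is no real obstacle here: the only step deserving any attention is the second Cauchy--Schwarz inequality bounding the mixed energy $E_+(A_i, A_j)$ by $E_+(A_i)^{1/2} E_+(A_j)^{1/2}$, and this is already recorded and used in display (\ref{eq2}) of the motivating discussion. The factor $M^{-4}$ arises solely from the covering multiplicity, so the lemma is essentially a weighted version of the trivial separation bound for an arbitrary (possibly redundant) cover.
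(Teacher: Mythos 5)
Your proposal is correct and is essentially the paper's own argument: the paper bounds $M^4 E_+(A)$ by the quadruple sum $\sum_{i,j,k,l}\sum_x 1_{A_i}*1_{A_j}(x)\,1_{A_k}*1_{A_l}(x)$ and applies Cauchy--Schwarz twice to each term, which is exactly your pointwise bound $M^2(1_A*1_A)\le\sum_{i,j}1_{A_i}*1_{A_j}$ followed by the triangle inequality in $L^2$ and the mixed-energy estimate $E_+(A_i,A_j)\le E_+(A_i)^{1/2}E_+(A_j)^{1/2}$, just organized at the level of $\|\cdot\|_2$ rather than the fourth power. No gaps; your write-up is a clean reformulation of the same proof.
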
 
\begin{proof}	
	Since each $a \in A$ belongs to at least $M$ sets $A_i$, we have
	$$
  M^4 E_+(A) \leq \sum_{1 \leq i, j, k, l \leq L} \sum_{x} 1_{A_i} * 1_{A_j}(x)  1_{A_k} * 1_{A_l}(x).	
	$$	
	Applying the Cauchy-Schwarz inequality twice, we  bound
	\ben
	\sum_{x} 1_{A_i} * 1_{A_j}(x)  1_{A_k} * 1_{A_l}(x) &\leq& \left(\sum_{x}1_{A_i} * 1_{A_j}^2(x) \right)^{1/2} \left(\sum_{x}1_{A_k} * 1_{A_l}^2(x) \right)^{1/2} \nonumber\\
																				  &=& E^{1/2}_+(A_i, A_j)	E^{1/2}_+(A_k, A_l) \nonumber \\
																				  &\leq& E^{1/4}_+(A_i)E^{1/4}_+(A_j)E^{1/4}_+(A_k)E^{1/4}_+(A_l).\nonumber
	\een
	Thus, after summing over the indices $1 \leq i, j, k, l \leq L$ one gets		
	$$
	M^4 E_+(A) \leq \left( \sum_{1\leq i \leq L} E^{1/4}_+(A_i)\right)^4
	$$
	and the claim follows.																  
\end{proof}

It remains to apply Lemma \ref{lm:energycoverlemma}. Take $a \in \mathcal{A}$ and an arbitrary $a' \in \mathcal{A'}$. One can write 
$$
a = \frac{a}{a'}a'.
$$
Thus, taking $A_\alpha := \alpha \mathcal{A'}$  and the covering 
$$
A \subset \bigcup_{\alpha}  A_{\alpha}
$$ with $\alpha \in \frac{\mathcal{A}}{\mathcal{A'}}$, we conclude that each $a \in \mathcal{A}$ is covered by at least $|\mathcal{A'}|$ sets $A_\alpha$. On the other hand, clearly 
$$
E_+(A_\alpha) = E_+(\mathcal{A'}) \leq K^{2B_1} |\mathcal{A}|^{2 + \gamma }.
$$
Also, by the Pl\"unecke-Ruzsa inequality
$$
\left| \frac{\mathcal{A}}{\mathcal{A'}}  \right| \leq \left| \frac{\mathcal{A}}{\mathcal{A}}  \right| \leq K^2 |\mathcal{A}|.
$$
Thus, applying Lemma \ref{lm:energycoverlemma} with $M = |\mathcal{A'}|$ and $L = |\mathcal{A}/\mathcal{A'}|$, we get
$$
E_+(\mathcal{A}) \leq  K^8 \frac{|\mathcal{A}|^4}{|\mathcal{A'}|^4}  K^{2B_1} |\mathcal{A}|^{2 + \gamma } \leq K^{2B_1 + 8 + 4A_1} |\mathcal{A}|^{2 + 8\tau + \gamma}. 
$$
 
By taking $\tau, \gamma$ small enough, we finish the proof of Theorem \ref{thm:maintheorem} since $B_1, A_1$ depend only on $\tau$ and $\gamma$.
\end{proof}

\section{Acknowledgments}
The work on this exposition was supported by a postdoctoral fellowship funded by the Knuth and Alice Wallenberg Foundation. 

The author is indebted to Oliver Roche-Newton and Brandon Hanson for scrupulous proof-reading and valuable discussions which contributed much to the exposition.


\begin{thebibliography}{99}
  \bibitem{BourgainChang} J. Bourgain and M.-C. Chang, On the size of $k$-fold sum and product set of integers,  JAMS 17(2) (2003), 473--497
  \bibitem{BourgainDemeterGuth} J. Bourgain, C. Demeter and L. Guth, Proof of the main conjecture in Vinogradov’s Mean Value Theorem for degrees higher than three, Annals of Math. 84(2) (2016) 633--682 
  \bibitem{Chang} M.-C. Chang, The Erd\H{o}s-Szemer\'edi Problem on Sum Set and Product Set, Annals of Math. 157(3) (2003), 939--957
%	\bibitem{CrootProblems} E. Croot and V. Lev, Open problems in additive combinatorics. Additive combinatorics, 207--233, 
%	CRM Proc. Lecture Notes, 43, Amer. Math. Soc., Providence, RI, 2007.
%	\bibitem{RNZ} O. Roche-Newton and D. Zhelezov, A bound on the multiplicative energy of a sum set and extremal sum-product problems, Moscow J. Comb. Number Theory 5 (2015), %no. 1.
\bibitem{TaoBlog} https://terrytao.wordpress.com/2015/12/10/decoupling-and-the-bourgain-demeter-guth-proof-of-the-vinogradov-main-conjecture
%    \bibitem{TaoSurvey} T. Tao, The sum-product phenomenon in arbitrary rings. Contrib. Discrete Math. 4 (2009), no. 2, 59--82. 
	\bibitem{TaoVu} T. Tao and V. Vu, Additive combinatorics, Cambridge Studies in Mathematics, Cambridge Univ. Press, 2006.
%	\bibitem{Z1} D. Zhelezov, On sets with small additive doubling in product sets, to appear in J. Number Theory.
%	\bibitem{Z2} D. Zhelezov, Improved bounds for arithmetic progressions in product sets, to appear in Int. J. Number Theory.
\end{thebibliography}
\end{document}